\newtheorem{thm}{Theorem}[section]
\newtheorem{cor}[thm]{Corollary}
\newtheorem{lem}[thm]{Lemma}
\newtheorem{exm}[thm]{Example}
\newtheorem{fact}[thm]{Fact}
\newtheorem{prop}[thm]{Proposition}
\theoremstyle{definition}
\theoremstyle{remark}
\newtheorem{rem}[thm]{\bf Remark}
\numberwithin{equation}{section}
\begin{document}
\title[A note on Serre duality and equivariantization]
{A note on Serre duality and equivariantization}

\author[Xiao-Wu Chen] {Xiao-Wu Chen}


\subjclass[2010]{16E30, 16D90, 14H52}
\date{\today}

\thanks{E-mail:
xwchen$\symbol{64}$mail.ustc.edu.cn}
\keywords{Serre duality,  commutator isomorphism, periodic Serre duality, equivariant object}%

\maketitle

\dedicatory{}%
\commby{}%

\begin{abstract}
For an abelian category with a Serre duality and a finite group action, we compute explicitly the Serre duality
on the category of equivariant objects. Special cases and examples are discussed. In particular, an abelian category with
a periodic Serre duality and its equivariantization are studied.
\end{abstract}

\section{Introduction}

Let $k$ be a field, and let $\mathcal{A}$ be a $k$-linear abelian category with a Serre duality \cite{RV,LZ}. We assume that there is a $k$-linear action on $\mathcal{A}$ by a finite group $G$; see \cite{RR,De,DGNO}. Then the category $\mathcal{A}^G$ of equivariant objects is abelian and naturally $k$-linear. In case that the order of $G$ is invertible in $k$, it is known that the category $\mathcal{A}^G$ has a Serre duality. However, it seems that an explicit description of the Serre duality on $\mathcal{A}^G$, in particular, the Serre functor on $\mathcal{A}^G$,  is not contained in any literature.

The first goal of this note is to describe the Serre duality on $\mathcal{A}^G$ explicitly. The second goal is to study an abelian category $\mathcal{A}$ with a periodic Serre duality. The latter means that  a certain power of the Serre functor on $\mathcal{A}$ is isomorphic to the identity functor. Examples of such abelian categories are the categories of coherent sheaves on weighted projective lines of tubular type or on elliptic curves. Indeed, we are motivated by these examples. We recall that the category of coherent sheaves on a weighted projective line of tubular type is related to the one on an elliptic curve via two different equivariantizations; see \cite{GL87, Lentalk, CCZ}.

We describe the content of this note.  In Section 2, we recall some notation on Serre duality and the notion of a commutator isomorphism which plays a central role.  The basic properties of commutator isomorphisms are collected in Proposition \ref{prop:1}, which is analogous to the results in \cite{Kel}. We mention that the notion of a perfect Serre duality seems to be of interest; see Subsection 2.2. For an abelian category $\mathcal{A}$ with a periodic Serre duality, we  observe the induced homomorphism from the group of isoclasses of auto-equivalences on $\mathcal{A}$ to the group of invertible elements in the center of $\mathcal{A}$; see Proposition \ref{prop:2}.

In Section 3, we calculate explicitly the Serre duality on $\mathcal{A}^G$ in Theorem \ref{thm:1}. Special cases of Theorem \ref{thm:1} are discussed. In particular, we prove that if $\mathcal{A}$ has a periodic Serre duality, so does $\mathcal{A}^G$; however, the orders of the Serre functors on $\mathcal{A}$ and $\mathcal{A}^G$ may differ; see Propositions \ref{prop:A} and \ref{prop:B}. We close the note with the motivating examples, which illustrate these propositions.

Throughout this note $k$ is a field.  We denote by $D={\rm Hom}_k(-, k)$ the duality functor on finite dimensional $k$-vector spaces.

\section{The Serre duality on an abelian category}

In this section, we recall from \cite{RV,LZ} some notation on Serre duality.  Following \cite{Kel}, we study basic properties of commutator isomorphisms. We mention that the notion of a perfect Serre duality seems to be of interest. We observe the induced homomorphism for an abelian category with a periodic Serre duality.

 Let $\mathcal{A}$ be a $k$-linear abelian category which is skeletally small. We assume that $\mathcal{A}$ is \emph{Hom-finite}, that is, for each pair $X, Y$ of objects the Hom space ${\rm Hom}_\mathcal{A}(X, Y)$ is finite dimensional over $k$.

\subsection{The trace map}

Recall by definition that a \emph{Serre duality} on $\mathcal{A}$ is a bifunctorial $k$-linear isomorphism
\begin{align}\label{equ:S}
\phi_{X, Y}\colon D{\rm Ext}_\mathcal{A}^1(X, Y)\stackrel{\sim}\longrightarrow {\rm Hom}_\mathcal{A}(Y, \mathbb{S}X),
\end{align}
where $\mathbb{S}\colon \mathcal{A}\rightarrow \mathcal{A}$ is a $k$-linear auto-equivalence, called the \emph{Serre functor} of $\mathcal{A}$; it is also called the \emph{Auslander-Reiten translation} of $\mathcal{A}$ in the literature. In this case, we say that $\mathcal{A}$ has the Serre duality (\ref{equ:S}); in particular, we observe that the Ext space ${\rm Ext}_\mathcal{A}^1(X, Y)$ is finite dimensional over $k$. It is well known that $\mathcal{A}$ has a Serre duality if and only if $\mathcal{A}$ is hereditary without nonzero projective or injective objects such that its bounded derived category $\mathbf{D}^b(\mathcal{A})$ is Hom-finite with a Serre duality in the sense of \cite[Definition 3.1]{BK}.

Recall that elements in ${\rm Ext}_\mathcal{A}^1(X, Y)$ are equivalence classes $[\xi]$ of extensions
$\xi\colon 0\rightarrow Y\rightarrow E\rightarrow X\rightarrow 0$.  For a morphism $f\colon X'\rightarrow X$, we denote by $\xi.f$ the pullback of $\xi$ along $f$, whose class in ${\rm Ext}_\mathcal{A}^1(X', Y)$ is denoted by $[\xi].f$; similarly, for a morphism
$g\colon Y\rightarrow Y'$, we denote by $g.\xi$ the pushout of $\xi$ along $g$ and by $g.[\xi]$ its class in ${\rm Ext}_\mathcal{A}^1(X, Y')$.

The following standard fact will be used later.

\begin{lem}\label{lem:McL}
The following two statements hold.
\begin{enumerate}
\item Assume that the following diagram is commutative with exact rows.
\[\xymatrix{
\xi\colon 0\ar[r] & X \ar[d]^-{f}\ar[r] &Y \ar[d] \ar[r] & Z \ar[r] \ar[d]^-{g} & 0\\
\xi'\colon 0\ar[r] & X' \ar[r] &Y' \ar[r] & Z' \ar[r] & 0
}\]
Then we have $f.[\xi]=[\xi'].g$ in ${\rm Ext}^1_\mathcal{A}(Z, X')$.
\item Let $\theta\colon F\rightarrow F'$ be a natural transformation between two auto-equivalences on $\mathcal{A}$, and let $[\xi]\in {\rm Ext}^1_\mathcal{A}(Z, X)$. Then we have $\theta_X.[F(\xi)]=[F'(\xi)].\theta_Z$ in ${\rm Ext}^1_\mathcal{A}(FZ, F'X)$.
\end{enumerate}
\end{lem}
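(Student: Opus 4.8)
For part (1), I would work directly with Yoneda Ext and the standard characterization of pullback and pushout of extensions. Recall that $f.[\xi]$ is the class of the pushout of $\xi$ along $f\colon X\to X'$, while $[\xi'].g$ is the class of the pullback of $\xi'$ along $g\colon Z\to Z'$. Given the commutative ladder with exact rows, the plan is to exhibit a commutative diagram connecting $\xi$, the pushout $f.\xi$, the pullback $\xi'.g$, and $\xi'$, in which all vertical maps over $X'$ and $Z$ are identities. Concretely, the middle map $Y\to Y'$ together with $f$ and $g$ induces, by the universal property of the pushout, a morphism from $f.\xi$ to $\xi'$ lying over $(\mathrm{id}_{X'}, g)$; dually it induces a morphism from $\xi$ to $\xi'.g$ lying over $(f, \mathrm{id}_Z)$. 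Composing these comparison morphisms and using that two extensions of $Z$ by $X'$ related by a morphism inducing the identity on both ends are equivalent, one gets $f.[\xi] = [\xi'].g$. This is a routine diagram chase that I will not spell out in full; it is essentially the statement that the Yoneda composite square commutes, and it is the only genuinely computational part.

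For part (2), the idea is to reduce it to part (1). Apply $F$ and $F'$ to the extension $\xi\colon 0\to X\to E\to Z\to 0$ to obtain two extensions $F(\xi)\colon 0\to FX\to FE\to FZ\to 0$ and $F'(\xi)\colon 0\to F'X\to F'E\to F'Z\to 0$ (here $F,F'$ are exact, being auto-equivalences). The natural transformation $\theta\colon F\to F'$ supplies vertical maps $\theta_X$, $\theta_E$, $\theta_Z$ making the evident ladder
\[
\xymatrix{
F(\xi)\colon 0\ar[r] & FX \ar[d]^-{\theta_X}\ar[r] & FE \ar[d]^-{\theta_E} \ar[r] & FZ \ar[r] \ar[d]^-{\theta_Z} & 0\\
F'(\xi)\colon 0\ar[r] & F'X \ar[r] & F'E \ar[r] & F'Z \ar[r] & 0
}
\]
commute, by naturality of $\theta$ applied to the monomorphism $X\to E$ and the epimorphism $E\to Z$. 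Now part (1), applied with $f=\theta_X$ and $g=\theta_Z$, yields exactly $\theta_X.[F(\xi)] = [F'(\xi)].\theta_Z$ in ${\rm Ext}^1_\mathcal{A}(FZ, F'X)$.

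The main obstacle, such as it is, is bookkeeping rather than conceptual: one must be careful with the direction of pullback versus pushout and with the fact that the two induced comparison morphisms genuinely compose to give a morphism of extensions inducing identities on the outer terms, so that the equivalence-class identity follows. Since the excerpt calls this a ``standard fact,'' I expect the write-up to be brief, citing the standard properties of Yoneda Ext (as in Mac Lane) for part (1) and deducing part (2) as above.
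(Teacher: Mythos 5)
Your proposal is correct and follows essentially the same route as the paper: part (1) is the standard Yoneda-Ext fact (the paper simply cites Mac Lane, Chapter III, Proposition 1.8, while you sketch its standard proof via the universal properties of pushout and pullback), and part (2) is deduced from part (1) by applying $F$ and $F'$ to the extension and using naturality of $\theta$ to obtain the commutative ladder, exactly as in the paper.
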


\begin{proof}
(1) is due to  \cite[Chapter III, Proposition 1.8]{McL}, while (2) is a direct application of (1) to the following commutative diagram
\[\xymatrix{
F(\xi)\colon & 0\ar[r] & FX \ar[d]^-{\theta_X}\ar[r] & FY\ar[r] \ar[d]^-{\theta_Y} & FZ\ar[r] \ar[d]^-{\theta_Z} & 0\\
F'(\xi)\colon & 0\ar[r] & F'X\ar[r] & F'Y\ar[r] & F'Z\ar[r] & 0.\\
}\]
\end{proof}

The Serre duality (\ref{equ:S}) yields a non-degenerated pairing
$$\langle -, -\rangle_{X, Y}\colon {\rm Ext}_\mathcal{A}^1(X, Y)\times {\rm Hom}_\mathcal{A}(Y, \mathbb{S}X)\longrightarrow k$$
such that $\langle [\xi], f\rangle_{X,Y}=\phi^{-1}_{X,Y}(f)([\xi])$. For each object $X$, we define the \emph{trace map}
$${\rm Tr}_X\colon {\rm Ext}_\mathcal{A}^1(X, \mathbb{S}X)\longrightarrow k$$
 by ${\rm Tr}_X([\xi])=\langle [\xi], {\rm Id}_{\mathbb{S}X}\rangle_{X, \mathbb{S}X}=\phi^{-1}_{X, \mathbb{S}X}({\rm Id}_{\mathbb{S}X})([\xi])$.

We have the following well-known observations.

\begin{lem}\label{lem:1}
Keep the notation as above. Then we have the following statements:
\begin{enumerate}
\item ${\rm Tr}_X(f.[\xi])=\langle [\xi], f\rangle_{X, Y}$ for any $[\xi]\in {\rm Ext}_\mathcal{A}^1(X,Y)$ and $f\colon Y\rightarrow \mathbb{S}X$;
    \item ${\rm Tr}_{X'}([\xi'].f')={\rm Tr}_X(\mathbb{S}(f').[\xi'])$ for any $[\xi'] \in {\rm Ext}_\mathcal{A}^1(X, \mathbb{S}X')$ and $f'\colon X'\rightarrow X$;
        \item two morphisms $f, f'\colon Y\rightarrow \mathbb{S}X$ are equal if and only if ${\rm Tr}_X(f.[\xi])={\rm Tr}_X(f'.[\xi])$ for each $[\xi]\in {\rm Ext}_\mathcal{A}^1(X, Y)$.
\end{enumerate}
\end{lem}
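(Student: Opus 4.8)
The plan is to derive all three statements directly from the defining property of the trace map, namely ${\rm Tr}_X([\xi])=\langle [\xi], {\rm Id}_{\mathbb{S}X}\rangle_{X,\mathbb{S}X}$, together with the bifunctoriality of the pairing $\langle -,-\rangle$. The bifunctoriality is itself inherited from the bifunctoriality of the isomorphism $\phi_{X,Y}$ in (\ref{equ:S}); concretely, naturality in the first variable gives $\langle [\xi].f', g\rangle_{X',Y} = \langle [\xi], g\circ\mathbb{S}(f')\rangle_{X,Y}$ for $f'\colon X'\to X$, and naturality in the second variable gives $\langle g'.[\xi], h\rangle_{X,Y'} = \langle [\xi], h\circ g'\rangle_{X,Y}$ for $g'\colon Y\to Y'$. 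I would first spell out these two naturality identities carefully, since everything else is a one-line consequence.

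For (1): apply the second naturality identity with $Y'=\mathbb{S}X$, $g'=f\colon Y\to \mathbb{S}X$, and $h={\rm Id}_{\mathbb{S}X}$. Then ${\rm Tr}_X(f.[\xi]) = \langle f.[\xi], {\rm Id}_{\mathbb{S}X}\rangle_{X,\mathbb{S}X} = \langle [\xi], {\rm Id}_{\mathbb{S}X}\circ f\rangle_{X,Y} = \langle [\xi], f\rangle_{X,Y}$, as desired. For (3): the pairing $\langle -,-\rangle_{X,Y}$ is non-degenerate, so a morphism $f\colon Y\to\mathbb{S}X$ is zero if and only if $\langle [\xi], f\rangle_{X,Y}=0$ for all $[\xi]$; applying this to $f-f'$ and using bilinearity together with part (1) (which identifies $\langle [\xi], f\rangle_{X,Y}$ with ${\rm Tr}_X(f.[\xi])$) yields the claim immediately.

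For (2): here the input is $[\xi']\in{\rm Ext}^1_\mathcal{A}(X,\mathbb{S}X')$ and $f'\colon X'\to X$, and we must compare ${\rm Tr}_{X'}([\xi'].f')$ with ${\rm Tr}_X(\mathbb{S}(f').[\xi'])$. Using part (1) twice, ${\rm Tr}_{X'}([\xi'].f') = \langle [\xi'].f', {\rm Id}_{\mathbb{S}X'}\rangle$; wait — one must be careful, since $[\xi'].f'$ lives in ${\rm Ext}^1_\mathcal{A}(X',\mathbb{S}X')$, so this equals $\langle [\xi'].f', {\rm Id}_{\mathbb{S}X'}\rangle_{X',\mathbb{S}X'}$. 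Now apply the first naturality identity with the roles set so that pullback along $f'$ in the Ext-variable corresponds to postcomposition with $\mathbb{S}(f')$ in the Hom-variable: $\langle [\xi'].f', {\rm Id}_{\mathbb{S}X'}\rangle_{X',\mathbb{S}X'} = \langle [\xi'], {\rm Id}_{\mathbb{S}X'}\circ\mathbb{S}(f')\rangle_{X,\mathbb{S}X'} = \langle [\xi'], \mathbb{S}(f')\rangle_{X,\mathbb{S}X'}$, and by part (1) again this is ${\rm Tr}_X(\mathbb{S}(f').[\xi'])$.

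I expect the only real subtlety — the ``main obstacle'' such as it is — to be bookkeeping: getting the variances straight, i.e. confirming that under the isomorphism $\phi$ a pullback $[\xi].f'$ in the contravariant Ext-slot is dual to postcomposition by $\mathbb{S}(f')$ (not precomposition, and on the correct side) in ${\rm Hom}_\mathcal{A}(-,\mathbb{S}(-))$, and similarly that a pushout $g'.[\xi]$ is dual to precomposition by $g'$. These are exactly the two halves of the bifunctoriality of $\phi_{X,Y}$, so once stated explicitly there is no computational difficulty; there is no deep step here, only the need to be scrupulous with the direction of the induced maps on Hom- and Ext-spaces.
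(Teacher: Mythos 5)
Your proof is correct and follows exactly the paper's (very terse) argument: (1) from the naturality of $\phi_{X,Y}$ in the variable $Y$, (2) from (1) together with the naturality in $X$, and (3) from (1) together with the non-degeneracy of the pairing. The only blemish is a composition-order slip in your stated first naturality identity, which should read $\langle [\xi].f', g\rangle_{X',Y}=\langle [\xi], \mathbb{S}(f')\circ g\rangle_{X,Y}$ rather than $g\circ\mathbb{S}(f')$ (the latter does not typecheck, as $g\colon Y\to\mathbb{S}X'$ and $\mathbb{S}(f')\colon \mathbb{S}X'\to\mathbb{S}X$); since you only apply it with $g=\mathrm{Id}_{\mathbb{S}X'}$, the conclusion is unaffected.
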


\begin{proof}
(1) follows from the naturalness of $\phi_{X, Y}$ in the variable $Y$, and (2) follows from (1) and the naturalness of $\phi_{X, Y}$ in the variable  $X$. (3) follows from (1) and the non-degeneratedness of the pairing $\langle -, -\rangle_{X, Y}$.
\end{proof}

\subsection{The commutator isomorphism}

Let $\mathcal{A}$ have the Serre duality (\ref{equ:S}). The following result is implicitly contained in \cite[Subsection 2.3]{Kel}.

\begin{lem}\label{lem:2}
Let $F\colon \mathcal{A}\rightarrow \mathcal{A}$ be a $k$-linear auto-equivalence. Then there is a unique natural isomorphism $\sigma_F\colon F\mathbb{S}\rightarrow \mathbb{S}F$ satisfying
\begin{align}\label{equ:1}
{\rm Tr}_X([\xi])={\rm Tr}_{FX}((\sigma_F)_X.[F(\xi)])
 \end{align}
 for all objects $X$ and $[\xi]\in {\rm Ext}_\mathcal{A}^1(X, \mathbb{S}X)$.
\end{lem}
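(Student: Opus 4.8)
The plan is to construct the components $(\sigma_F)_X \colon F\mathbb{S}X \to \mathbb{S}FX$ directly from the trace maps, then verify naturality and uniqueness. For a fixed object $X$, note that $F$ induces a $k$-linear isomorphism ${\rm Ext}^1_\mathcal{A}(X, \mathbb{S}X) \to {\rm Ext}^1_\mathcal{A}(FX, F\mathbb{S}X)$, $[\xi] \mapsto [F(\xi)]$, since $F$ is an exact auto-equivalence. Composing the inverse of this isomorphism with ${\rm Tr}_X$ gives a linear functional ${\rm Ext}^1_\mathcal{A}(FX, F\mathbb{S}X) \to k$. By the non-degeneracy of the pairing $\langle -, -\rangle_{FX, F\mathbb{S}X}$ coming from the Serre duality (\ref{equ:S}) applied with the pair $(FX, F\mathbb{S}X)$, there is a unique morphism $(\sigma_F)_X \colon F\mathbb{S}X \to \mathbb{S}FX$ such that $\langle \eta, (\sigma_F)_X \rangle_{FX, F\mathbb{S}X} = {\rm Tr}_X(\text{the }F\text{-preimage of }\eta)$ for all $\eta \in {\rm Ext}^1_\mathcal{A}(FX, F\mathbb{S}X)$; taking $\eta = [F(\xi)]$ and using Lemma \ref{lem:1}(1), this is exactly the defining equation (\ref{equ:1}). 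This simultaneously gives existence and uniqueness of each component.

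Next I would check that $(\sigma_F)_X$ is an isomorphism. This is where the symmetry of the situation helps: running the same construction with $F^{-1}$ (a quasi-inverse of $F$) in place of $F$, and with the object $FX$ in place of $X$, produces a candidate inverse, and one verifies the composites are identities by appealing to Lemma \ref{lem:1}(3) — two morphisms into $\mathbb{S}(-)$ agree iff they pair identically against all extensions. Alternatively, since ${\rm Tr}_X$ is a nonzero functional precisely because $\phi$ is an isomorphism (so $\phi^{-1}_{X,\mathbb{S}X}({\rm Id}_{\mathbb{S}X}) \neq 0$), and $[F(-)]$ is bijective, the induced functional on ${\rm Ext}^1_\mathcal{A}(FX, F\mathbb{S}X)$ is nonzero, hence corresponds to a nonzero — in fact, by a dimension/rank count using that both sides represent the full dual space, an isomorphism. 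I would phrase this via the observation that $\phi_{X,Y}$ being a natural isomorphism means the pairing $\langle -,-\rangle$ is perfect on both sides, so the assignment "functional $\mapsto$ representing morphism" is itself a bijection.

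For naturality of $\sigma_F$, I must show that for every morphism $h \colon X \to X'$ the square relating $(\sigma_F)_X$ and $(\sigma_F)_{X'}$ via $F\mathbb{S}(h)$ and $\mathbb{S}F(h)$ commutes. By Lemma \ref{lem:1}(3) it suffices to test both composites $F\mathbb{S}X \to \mathbb{S}FX'$ against an arbitrary $[\zeta] \in {\rm Ext}^1_\mathcal{A}(FX', F\mathbb{S}X)$ under ${\rm Tr}_{FX'}$, i.e. to compare ${\rm Tr}_{FX'}\big((\mathbb{S}F(h) \circ (\sigma_F)_X).[\zeta]\big)$ with ${\rm Tr}_{FX'}\big(((\sigma_F)_{X'} \circ F\mathbb{S}(h)).[\zeta]\big)$. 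Using Lemma \ref{lem:1}(2) to move $\mathbb{S}F(h)$ across the trace as a pullback by $F(h)$, then the defining equation (\ref{equ:1}) for $(\sigma_F)_{X'}$, then the functoriality of pullback/pushout together with Lemma \ref{lem:McL}, one reduces both sides to the same expression ${\rm Tr}_X$ of a single extension built from $h$ and $\zeta$; this is the routine bookkeeping I would not spell out in full. The main obstacle is precisely this naturality verification: keeping track of the interplay between $\mathbb{S}$ applied to $F(h)$, the pullback $.F(h)$, and the isomorphism $[F(-)]$ on Ext groups requires careful use of Lemma \ref{lem:1}(2) (the "adjunction" between $\mathbb{S}(f')$-pushout and $f'$-pullback under the trace) and Lemma \ref{lem:McL}, and one must be disciplined about the order in which these are applied. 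Everything else is a direct consequence of non-degeneracy.
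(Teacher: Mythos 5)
Your construction is correct, but it takes a genuinely different, more pointwise route than the paper. The paper fixes $X$, lets $Y$ vary, and forms the bifunctorial isomorphism $\Theta\colon {\rm Hom}_\mathcal{A}(FY,\mathbb{S}FX)\rightarrow {\rm Hom}_\mathcal{A}(FY,F\mathbb{S}X)$ as the composite of $\phi^{-1}_{FX,FY}$, $DF$, $\phi_{X,Y}$ and $F$; since $F$ is an equivalence, Yoneda's lemma yields an isomorphism $\delta_X\colon \mathbb{S}FX\rightarrow F\mathbb{S}X$ with $\Theta={\rm Hom}_\mathcal{A}(FY,\delta_X)$, so that invertibility of $\delta_X$ and its naturality in $X$ both come for free from $\Theta$ being a bifunctorial isomorphism; specializing to $Y=\mathbb{S}X$ and $f={\rm Id}_{\mathbb{S}X}$ then gives (\ref{equ:1}) with $\sigma_F=\delta^{-1}$. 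You instead represent the single functional ${\rm Tr}_X\circ [F(-)]^{-1}$ on ${\rm Ext}^1_\mathcal{A}(FX,F\mathbb{S}X)$, which correctly produces each component and its uniqueness, and your naturality check via Lemma \ref{lem:1}(2),(3) and Lemma \ref{lem:McL} does go through as sketched. What this buys is a very concrete description; what it costs is that invertibility and naturality must be verified separately.

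The one genuine weak point is your second argument for invertibility: ``the functional is nonzero, hence the representing morphism is an isomorphism by a dimension/rank count'' does not work. The morphism $(\sigma_F)_X\colon F\mathbb{S}X\rightarrow \mathbb{S}FX$ lives in the abelian category $\mathcal{A}$, not between vector spaces; non-degeneracy of the pairing only tells you $(\sigma_F)_X\neq 0$, and a nonzero morphism in $\mathcal{A}$ need not be invertible. Your first suggestion (a candidate inverse built from a quasi-inverse $F^{-1}$) can be made rigorous, but it requires establishing the composition rule for these transformations (essentially Lemma \ref{lem:key}(1) before they are known to be isomorphisms, together with $\sigma_{{\rm Id}}={\rm Id}$) and careful handling of the unit $F^{-1}F\cong {\rm Id}_\mathcal{A}$. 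The cleanest repair is simply to let $Y$ vary in your own construction, at which point it becomes the paper's Yoneda argument and invertibility is automatic.
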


This unique isomorphism $\sigma_F\colon F\mathbb{S}\rightarrow \mathbb{S}F$ is called the \emph{commutator isomorphism} associated to $F$.

\begin{proof}
The uniqueness of the morphism $(\sigma_F)_X$ follows from Lemma \ref{lem:1}(3).

For the existence of $\sigma_F$, we consider for each pair $X, Y$ of objects in $\mathcal{A}$ the following bifunctorial isomorphisms
\[\xymatrix{
\Theta \colon {\rm Hom}_\mathcal{A}(FY, \mathbb{S}FX) \ar[r]^-{\phi_{FX, FY}^{-1}} & D{\rm Ext}^1_\mathcal{A}(FX, FY)\ar[r]^-{DF} & D{\rm Ext}^1_\mathcal{A}(X, Y)  \ar[ld]_-{{\phi_{X,Y}}}\\
& {\rm Hom}_\mathcal{A}(Y, \mathbb{S}X)\ar[r]^-{F}&  {\rm Hom}_\mathcal{A}(FY, F\mathbb{S}X).}\]
Recall that $F$ is an auto-equivalence on $\mathcal{A}$. We apply Yoneda's lemma to have an isomorphism $\delta_X\colon \mathbb{S}FX\rightarrow F\mathbb{S} X$ such that $\Theta={\rm Hom}_\mathcal{A}(FY, \delta_X)$; this identity implies that for any morphism $f\colon Y\rightarrow \mathbb{S}X$, we have $$\phi^{-1}_{X, Y}(f)(-)=\phi^{-1}_{FX, FY}(\delta_X^{-1}\circ F(f))(F(-))$$ as $k$-linear functions on ${\rm Ext}^1_\mathcal{A}(X, Y)$. Putting $Y=\mathbb{S}X$ and $f={\rm Id}_{\mathbb{S}X}$, we have ${\rm Tr}_X([\xi])={\rm Tr}_{FX}(\delta_X^{-1}.[F(\xi)])$ for each $[\xi]\in {\rm Ext}_\mathcal{A}^1(X, \mathbb{S}X)$.  The isomorphism $\delta_X$ is natural in $X$ by the functorialness of $\Theta$. Set $\sigma_F=\delta^{-1}$. Then we are done.
\end{proof}

We introduce the following notion for later use. The Serre duality (\ref{equ:S}) is said to be \emph{perfect} provided that $\sigma_{\mathbb{S}}={\rm Id}_{\mathbb{S}^2}$, or equivalently, the corresponding trace map satisfies
\begin{align}\label{equ:4}
{\rm Tr}_X([\xi])={\rm Tr}_{\mathbb{S}X}([\mathbb{S}(\xi)])
 \end{align}
 for all objects $X$ and $[\xi]\in {\rm Ext}_\mathcal{A}^1(X, \mathbb{S}X)$. In this case, we say that the category $\mathcal{A}$ has a \emph{perfect Serre duality}. This implies that any Serre duality on $\mathcal{A}$ is perfect; see Remark \ref{rem:1}. The author does not know any example of an abelian category $\mathcal{A}$ with a non-perfect Serre duality; compare the triangulated case in \cite[Lemma 2.1 b)]{Kel}.

\begin{rem}\label{rem:1}
Given the Serre duality (\ref{equ:S}) on $\mathcal{A}$, it is well known that any other Serre duality $\phi'$ on $\mathcal{A}$ differs from $\phi$ by an isomorphism. More precisely, assume that
\begin{align}\label{equ:S'}\phi'_{X, Y}\colon D{\rm Ext}_\mathcal{A}^1(X, Y)\stackrel{\sim}\longrightarrow {\rm Hom}_\mathcal{A}(Y, \mathbb{S}'X)
\end{align}
is another Serre duality. Then there is a unique natural isomorphism $\theta\colon \mathbb{S}'\rightarrow \mathbb{S}$ such that $\phi_{X, Y}={\rm Hom}_\mathcal{A}(Y, \theta_X) \circ \phi'_{X, Y}$. The corresponding trace map is given by ${\rm Tr}'_X([\xi])={\rm Tr}_X(\theta_X.[\xi])$ for each $[\xi]\in {\rm Ext}_\mathcal{A}^1(X, \mathbb{S}'X)$.

We claim that if the Serre duality (\ref{equ:S}) is perfect, so is (\ref{equ:S'}). Indeed, the following identity proves the claim:
\begin{align*}
{\rm Tr}'_{\mathbb{S'}X}([\mathbb{S'}(\xi)])&={\rm Tr}_{\mathbb{S}'X}(\theta_{\mathbb{S'}X}.[\mathbb{S}'(\xi)])\\
&={\rm Tr}_{\mathbb{S}'X}([\mathbb{S}(\xi)].\theta_X)\\
&={\rm Tr}_{\mathbb{S}X}(\mathbb{S}(\theta_{X}).[\mathbb{S}(\xi)]\\
&={\rm Tr}_{\mathbb{S}X}([\mathbb{S}(\theta_X.\xi)])\\
&={\rm Tr}_{X}(\theta_X.[\xi])={\rm Tr}'_X([\xi]).\end{align*}
 Here, the second equality uses the fact $\theta_{\mathbb{S'}X}.[\mathbb{S}'(\xi)]=[\mathbb{S}(\xi)].\theta_X$, which is a consequence of Lemma \ref{lem:McL}(2). The third equality uses Lemma \ref{lem:1}(2), while the fifth uses the perfectness of (\ref{equ:S}).
\end{rem}

\begin{exm}\label{exm:3}
{\rm Let $\mathcal{A}$ have the Serre duality (\ref{equ:S}). We say that the Serre functor $\mathbb{S}$ is \emph{trivial} provided that it is isomorphic to the identity functor. In this case, the Serre duality is perfect. Indeed, by Remark \ref{rem:1} we may assume that $\mathbb{S}$ equals the identity functor. Then the perfectness follows from (\ref{equ:4}).}
\end{exm}

The dependence of the commutator isomorphism $\sigma_F$ on the auto-equivalence $F$ is shown in the following result, where the first statement is due to \cite[Lemma 2.1 b)]{Kel} in a slightly different setup.

\begin{lem}\label{lem:key}
Keep the notation as above. Then the following two statements hold.
\begin{enumerate}
\item For two $k$-linear auto-equivalences $F_1$ and $F_2$ on $\mathcal{A}$, we have $\sigma_F=(\sigma_{F_1}F_2)\circ (F_1\sigma_{F_2})$, where $F=F_1F_2$.
    \item Let $\theta\colon F\rightarrow F'$ be a natural isomorphism between two $k$-linear auto-equivalences $F$ and $F'$ on $\mathcal{A}$. Then we have $\sigma_{F'}\circ \theta\mathbb{S}=\mathbb{S}\theta\circ \sigma_F$.
\end{enumerate}
\end{lem}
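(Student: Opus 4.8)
The plan is to prove both statements by exploiting the uniqueness clause in Lemma \ref{lem:2}: each of $\sigma_{F_1F_2}$, $\sigma_{F'}$ is \emph{characterized} as the unique natural isomorphism making the defining trace identity (\ref{equ:1}) hold, so to identify it with a prescribed composite it suffices to check that the composite satisfies (\ref{equ:1}). For (1), I would take an object $X$ and a class $[\xi]\in {\rm Ext}^1_\mathcal{A}(X,\mathbb{S}X)$ and compute ${\rm Tr}_X([\xi])$ by applying the defining identity for $F_2$ first, then for $F_1$:
\[
{\rm Tr}_X([\xi]) = {\rm Tr}_{F_2X}\bigl((\sigma_{F_2})_X.[F_2(\xi)]\bigr)
= {\rm Tr}_{F_1F_2X}\Bigl((\sigma_{F_1})_{F_2X}.\bigl[F_1\bigl((\sigma_{F_2})_X.F_2(\xi)\bigr)\bigr]\Bigr).
\]
The one genuinely non-formal point is to rewrite $F_1\bigl((\sigma_{F_2})_X.F_2(\xi)\bigr)$, the image under $F_1$ of a pushout, as the pushout $F_1((\sigma_{F_2})_X).[F_1F_2(\xi)]$; this is just the fact that an additive functor commutes with pushouts (applied to the defining pushout square of $(\sigma_{F_2})_X.F_2(\xi)$), or equivalently Lemma \ref{lem:McL}(1). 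Combining the two pushouts and using $F = F_1F_2$ gives
\[
{\rm Tr}_X([\xi]) = {\rm Tr}_{FX}\Bigl(\bigl((\sigma_{F_1}F_2)\circ(F_1\sigma_{F_2})\bigr)_X.[F(\xi)]\Bigr),
\]
and since $(\sigma_{F_1}F_2)\circ(F_1\sigma_{F_2})\colon F\mathbb{S}\to \mathbb{S}F$ is a natural isomorphism, the uniqueness in Lemma \ref{lem:2} forces it to equal $\sigma_F$.

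For (2), the strategy is the same. Let $X$ be an object and $[\xi]\in {\rm Ext}^1_\mathcal{A}(X,\mathbb{S}X)$. Starting from ${\rm Tr}_X([\xi]) = {\rm Tr}_{FX}\bigl((\sigma_F)_X.[F(\xi)]\bigr)$, I want to transport everything along $\theta$. The key lemma here is Lemma \ref{lem:1}(2) together with the behaviour of the trace under an isomorphism of the source object: for an isomorphism $h\colon X\xrightarrow{\sim} X'$ one has ${\rm Tr}_{X'}([\eta].h) = {\rm Tr}_X(\mathbb{S}(h).[\eta])$ for $[\eta]\in {\rm Ext}^1_\mathcal{A}(X',\mathbb{S}X')$. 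Applying this with $h=\theta_X\colon FX\to F'X$ lets me convert ${\rm Tr}_{FX}$ of something into ${\rm Tr}_{F'X}$ of the pushforward by $\mathbb{S}(\theta_X)^{-1}$, at the cost of a pullback by $\theta_X^{-1}$. Meanwhile, Lemma \ref{lem:McL}(2) applied to $\theta$ gives $\theta_{\mathbb{S}X}.[F(\xi)] = [F'(\xi)].\theta_X$ inside ${\rm Ext}^1_\mathcal{A}(FX, F'\mathbb{S}X)$, which is exactly the gadget needed to trade the pullback by $\theta_X$ against a pushout by $\theta_{\mathbb{S}X}$. Chasing these identities, one rewrites
\[
{\rm Tr}_X([\xi]) = {\rm Tr}_{F'X}\Bigl(\bigl(\mathbb{S}(\theta_X)\circ(\sigma_F)_X\circ(\theta_{\mathbb{S}X})^{-1}\bigr).[F'(\xi)]\Bigr).
\]
Since $\mathbb{S}(\theta_X)\circ(\sigma_F)_X\circ(\theta_{\mathbb{S}X})^{-1}\colon F'\mathbb{S}X\to\mathbb{S}F'X$ assembles into a natural isomorphism $F'\mathbb{S}\to\mathbb{S}F'$, the uniqueness in Lemma \ref{lem:2} identifies it with $(\sigma_{F'})_X$; rearranging $\mathbb{S}(\theta_X)\circ(\sigma_F)_X = (\sigma_{F'})_X\circ\theta_{\mathbb{S}X}$ is precisely the asserted identity $\mathbb{S}\theta\circ\sigma_F = \sigma_{F'}\circ\theta\mathbb{S}$.

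Two bookkeeping remarks. First, I should double-check that the composites I produce are genuinely natural transformations of functors $F\mathbb{S}\to\mathbb{S}F$ (resp. $F'\mathbb{S}\to\mathbb{S}F'$) — this is automatic since they are built by horizontal/vertical composition of natural transformations ($\sigma_{F_i}$, $\theta$, and $\mathbb{S}\theta$), and invertibility is clear. Second, the only place where care is required about \emph{which} object one traces over is in part (2), where the source object changes from $FX$ to $F'X$ along the isomorphism $\theta_X$; the clean way to handle this is to isolate once and for all the formula ${\rm Tr}_{X'}([\eta].h)={\rm Tr}_X(\mathbb{S}(h).[\eta])$ for an isomorphism $h$, which is just Lemma \ref{lem:1}(2) read with $f'=h$ invertible. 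I expect the main obstacle — really the only subtlety — to be getting the pullback/pushout manipulations of extension classes in part (2) in the right order, i.e. correctly combining Lemma \ref{lem:McL}(2) with Lemma \ref{lem:1}(2); once the identity $\theta_{\mathbb{S}X}.[F(\xi)]=[F'(\xi)].\theta_X$ is in hand, the rest is a short formal chase terminating in an appeal to uniqueness.
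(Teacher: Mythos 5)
Your proposal is correct and follows essentially the same route as the paper: both parts reduce to verifying that the prescribed composite satisfies the defining trace identity (\ref{equ:1}) and then appealing to non-degeneracy (Lemma \ref{lem:1}(3), which is exactly what the uniqueness clause of Lemma \ref{lem:2} amounts to), with Lemma \ref{lem:McL}(2) supplying the exchange $\theta_{\mathbb{S}X}.[F(\xi)]=[F'(\xi)].\theta_X$ in part (2). The only blemish is a typing slip in your auxiliary trace-transport formula --- for $h\colon X\rightarrow X'$ it should read ${\rm Tr}_{X}([\eta].h)={\rm Tr}_{X'}(\mathbb{S}(h).[\eta])$ with $[\eta]\in {\rm Ext}^1_\mathcal{A}(X',\mathbb{S}X)$, i.e. Lemma \ref{lem:1}(2) relabelled --- but your final displayed identity is the correct one, so this does not affect the argument.
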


\begin{proof}
For (1), it suffices to prove that $(\sigma_F)_X=(\sigma_{F_1})_{F_2X}\circ F_1((\sigma_{F_2})_X)$  for each object $X$. Consider $\Delta={\rm Tr}_{FX}(((\sigma_{F_1})_{F_2X}\circ F_1((\sigma_{F_2})_X)).[F(\xi)])$ for any $[\xi]\in {\rm Ext}_\mathcal{A}^1(X, \mathbb{S}X)$. By Lemma \ref{lem:1}(3), it suffices to show that $\Delta={\rm Tr}_{FX}((\sigma_F)_X.[F(\xi)]).$

Indeed, we observe that $\Delta={\rm Tr}_{F_1(F_2X)}((\sigma_{F_1})_{F_2X}.[F_1( (\sigma_{F_2})_X.F_2(\xi))])$. By (\ref{equ:1})  for both $F_1$ and $F_2$, we have $\Delta={\rm Tr}_{F_2X}((\sigma_{F_2})_X.[F_2(\xi)])={\rm Tr}_X([\xi])$, which equals ${\rm Tr}_{FX}((\sigma_F)_X.[F(\xi)])$  by (\ref{equ:1}) for $F$. Then we are done with (1).

For (2), we take an object $X$ and  $[\xi]\in {\rm Ext}_\mathcal{A}^1(F'X, F\mathbb{S}X)$. Assume that $\theta_{\mathbb{S}X}.[\xi]=[F'(\xi')]$ for a unique $[\xi']\in {\rm Ext}_\mathcal{A}^1(X, \mathbb{S}X)$. We claim that $[F(\xi')]=[\xi].\theta_X$. Indeed, by applying Lemma \ref{lem:McL}(2) to $\theta$ and $[\xi']$, we obtain $\theta_{\mathbb{S}X}.[F(\xi')]=[F'(\xi')].\theta_X$. By $[F'(\xi')]=\theta_{\mathbb{S}X}.[\xi]$,  we have $\theta_{\mathbb{S}X}.[F(\xi')]=\theta_{\mathbb{S}X}.([\xi].\theta_X)$. Since $\theta_{\mathbb{S}X}$ is an isomorphism, we infer the claim.

 We will prove that $(\sigma_{F'})_X\circ \theta_{\mathbb{S}X}=\mathbb{S}(\theta_X)\circ (\sigma_F)_X$. Consider $\nabla={\rm Tr}_{F'X}(((\sigma_{F'})_X\circ \theta_{\mathbb{S}X}).[\xi])$ for $[\xi]\in {\rm Ext}_\mathcal{A}^1(F'X, F\mathbb{S}X)$. By Lemma \ref{lem:1}(3), it suffices to show that $\nabla={\rm Tr}_{F'X}((\mathbb{S}(\theta_X)\circ (\sigma_F)_X).[\xi])$.

Indeed, by (\ref{equ:1}) for $F'$, we have $\nabla={\rm Tr}_{F'X}((\sigma_{F'})_X.[F'(\xi')])={\rm Tr}_X([\xi'])$. Applying (\ref{equ:1}) for $F$, we have $\nabla={\rm Tr}_{FX}((\sigma_F)_X.[F(\xi')])$, which by the claim above equals ${\rm Tr}_{FX}((\sigma_F)_X.([\xi].\theta_X))={\rm Tr}_{FX}(((\sigma_F)_X.[\xi]).\theta_X)$. Applying Lemma \ref{lem:1}(2), we are done.
\end{proof}

We now extend Lemma \ref{lem:key} slightly. Let $F$ be a $k$-linear auto-equivalence on $\mathcal{A}$. For each $d\geq 1$, we define a natural isomorphism $\sigma_F^d\colon F\mathbb{S}^d\rightarrow \mathbb{S}^dF$ inductively as follows: $\sigma_F^1=\sigma_F$ and $\sigma^{d+1}_F=\mathbb{S}\sigma_F^{d}\circ \sigma_F\mathbb{S}^d$ for $d\geq 1$. Here, $\mathbb{S}^d$ denotes the $d$-th power of $\mathbb{S}$. We refer to the isomorphism $\sigma_F^d$ as the \emph{$d$-th commutator isomorphism} associated to $F$.

\begin{prop}\label{prop:1}
Let $\mathcal{A}$ have the Serre duality (\ref{equ:S}) and let $d\geq 1$. We keep the notation as above. Then the following two statements hold.
\begin{enumerate}
\item For two $k$-linear auto-equivalences $F_1$ and $F_2$ on $\mathcal{A}$, we have $\sigma^d_F=(\sigma^d_{F_1}F_2)\circ (F_1\sigma^d_{F_2})$, where $F=F_1F_2$.
    \item Let $\theta\colon F\rightarrow F'$ be a natural isomorphism between two $k$-linear auto-equivalences $F$ and $F'$ on $\mathcal{A}$. Then we have $\sigma^d_{F'}\circ \theta\mathbb{S}^d=\mathbb{S}^d\theta\circ \sigma^d_F$.
\end{enumerate}
\end{prop}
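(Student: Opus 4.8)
The plan is to prove both statements by induction on $d$, using Lemma \ref{lem:key} as the base case $d=1$ and the inductive definition $\sigma^{d+1}_F=\mathbb{S}\sigma^d_F\circ \sigma_F\mathbb{S}^d$ as the engine for the induction step. Throughout, the key bookkeeping device is that $\sigma^d_F$ is built as a ``staircase'' of single commutator isomorphisms, so that each identity amounts to showing two such staircases, read in different orders, agree; the only tools needed are the pentagon/naturality relations from Lemma \ref{lem:key} together with the interchange law for horizontal and vertical composition of natural transformations.

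For part (1), I would fix $d$ and assume $\sigma^d_F=(\sigma^d_{F_1}F_2)\circ(F_1\sigma^d_{F_2})$ with $F=F_1F_2$. Then I would expand $\sigma^{d+1}_F=\mathbb{S}\sigma^d_F\circ\sigma_F\mathbb{S}^d$, substitute the inductive formula into the first factor and Lemma \ref{lem:key}(1) (namely $\sigma_F=(\sigma_{F_1}F_2)\circ(F_1\sigma_{F_2})$) into the second, and then reorganize. The target is $(\sigma^{d+1}_{F_1}F_2)\circ(F_1\sigma^{d+1}_{F_2})$, which after expanding the two $(d+1)$-st commutators becomes $(\mathbb{S}\sigma^d_{F_1}\mathbb{S}^0\cdots)$-type composites; the reshuffling is a diagram chase where one repeatedly uses that whiskering by a fixed functor is functorial and that $\mathbb{S}(\alpha\circ\beta)=\mathbb{S}\alpha\circ\mathbb{S}\beta$. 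Part (2) is analogous but shorter: assuming $\sigma^d_{F'}\circ\theta\mathbb{S}^d=\mathbb{S}^d\theta\circ\sigma^d_F$, I would precompose the definition of $\sigma^{d+1}_{F'}$ appropriately, insert $\sigma^{d+1}_{F'}\circ\theta\mathbb{S}^{d+1}=\mathbb{S}\sigma^d_{F'}\circ\sigma_{F'}\mathbb{S}^d\circ\theta\mathbb{S}^{d+1}$, use Lemma \ref{lem:key}(2) (i.e. $\sigma_{F'}\circ\theta\mathbb{S}=\mathbb{S}\theta\circ\sigma_F$) whiskered by $\mathbb{S}^d$ to move $\theta$ past $\sigma_{F'}$, then use the inductive hypothesis whiskered suitably, and finally apply $\mathbb{S}(-)$-functoriality to recognize $\mathbb{S}^{d+1}\theta\circ\sigma^{d+1}_F$.

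An alternative, perhaps cleaner, route for part (1) is to prove directly from the defining property (\ref{equ:1}) that $\sigma^d_F$ is characterized by ${\rm Tr}_X([\xi])={\rm Tr}_{FX}((\sigma^d_F)_X.[F(\xi)])$ for all $[\xi]\in{\rm Ext}^1_\mathcal{A}(X,\mathbb{S}^dX)$ — this follows by an easy induction on $d$, applying (\ref{equ:1}) for $F$ with the object $X$ replaced by $\mathbb{S}X,\mathbb{S}^2X,\dots$ and Lemma \ref{lem:1}(2) to strip off the powers of $\mathbb{S}$ — and then both (1) and (2) drop out exactly as in the proofs of Lemma \ref{lem:key}(1) and (2), with $\mathbb{S}$ replaced by $\mathbb{S}^d$ and $F(\xi)$ by $F(\xi)$ with $\xi$ now an extension by $\mathbb{S}^dX$. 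I would likely present this trace-map characterization as the main lemma and then deduce the Proposition in one stroke.

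The main obstacle is purely organizational rather than conceptual: in part (1) the two expressions for $\sigma^{d+1}_F$ differ by a sequence of interchanges of the form $(\mathbb{S}\alpha)\circ(\beta\mathbb{S}^d)=(\mathbb{S}\alpha')\circ(\beta'\mathbb{S}^d)$ where the $\alpha,\beta$ are themselves whiskered composites, and one must be careful to whisker on the correct side (left by $\mathbb{S}$, right by $\mathbb{S}^d$, outside by $F_1$ or $F_2$) at each stage; the notation $\sigma^d_{F_1}F_2$ versus $F_1\sigma^d_{F_2}$ hides genuine left/right asymmetry. Adopting the trace-map characterization sidesteps this entirely, so that is the approach I would commit to, reserving the direct inductive manipulation of the staircases as a remark.
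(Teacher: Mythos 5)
Your first, inductive argument is precisely the paper's proof: induction on $d$ with Lemma \ref{lem:key} as the base case, expanding $(\sigma^d_{F_1}F_2)\circ(F_1\sigma^d_{F_2})$ via the definition $\sigma^d_F=\mathbb{S}\sigma^{d-1}_F\circ\sigma_F\mathbb{S}^{d-1}$, interchanging the two middle factors by naturality of $\sigma_{F_1}$, and then invoking the induction hypothesis together with Lemma \ref{lem:key}(1); your sketch of part (2) likewise carries out correctly the ``similar argument'' the paper leaves to the reader. Had you committed to that route, nothing more would need saying.

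However, the ``cleaner'' alternative you say you would actually commit to does not typecheck. The trace map ${\rm Tr}_X$ is by definition $\phi^{-1}_{X,\mathbb{S}X}({\rm Id}_{\mathbb{S}X})$ and is only defined on ${\rm Ext}^1_\mathcal{A}(X,\mathbb{S}X)$, because the Serre duality pairs ${\rm Ext}^1_\mathcal{A}(X,Y)$ with ${\rm Hom}_\mathcal{A}(Y,\mathbb{S}X)$, not with ${\rm Hom}_\mathcal{A}(Y,\mathbb{S}^dX)$. So for $[\xi]\in{\rm Ext}^1_\mathcal{A}(X,\mathbb{S}^dX)$ with $d>1$ the expression ${\rm Tr}_X([\xi])$ is meaningless, and so is ${\rm Tr}_{FX}((\sigma^d_F)_X.[F(\xi)])$, since $(\sigma^d_F)_X.[F(\xi)]$ lies in ${\rm Ext}^1_\mathcal{A}(FX,\mathbb{S}^dFX)$ rather than ${\rm Ext}^1_\mathcal{A}(FX,\mathbb{S}FX)$. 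A morphism into $\mathbb{S}^dFX=\mathbb{S}(\mathbb{S}^{d-1}FX)$ is detected, via Lemma \ref{lem:1}(3), by traces of extensions of $\mathbb{S}^{d-1}FX$, not of $FX$; Lemma \ref{lem:1}(2) shifts the base object but never removes a power of $\mathbb{S}$ from the coefficient object, so there is no ``stripping off'' that produces a canonical linear form on ${\rm Ext}^1_\mathcal{A}(X,\mathbb{S}^dX)$. Consequently the claimed characterization of $\sigma^d_F$ has no meaning as stated, and the proofs of Lemma \ref{lem:key}(1) and (2) cannot be repeated ``with $\mathbb{S}$ replaced by $\mathbb{S}^d$'', because those proofs use the Serre duality for $\mathbb{S}$ itself and there is no analogous $d$-fold duality available. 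Commit to the induction instead; it is complete and is what the paper does.
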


\begin{proof}
The case that $d=1$ is due to Lemma \ref{lem:key}.

We assume by induction that (1) holds for $d-1$. We have the following identity
\begin{align*}
(\sigma^d_{F_1}F_2)\circ (F_1\sigma^d_{F_2})&=(\mathbb{S}\sigma_{F_1}^{d-1}F_2)\circ (\sigma_{F_1}\mathbb{S}^{d-1}F_2)\circ (F_1\mathbb{S}\sigma_{F_2}^{d-1})\circ (F_1\sigma_{F_2}\mathbb{S}^{d-1})\\
&= (\mathbb{S}\sigma_{F_1}^{d-1}F_2)\circ (\mathbb{S}F_1\sigma_{F_2}^{d-1})\circ (\sigma_{F_1}F_2\mathbb{S}^{d-1})\circ (F_1\sigma_{F_2}\mathbb{S}^{d-1})\\
&=\mathbb{S}\sigma^{d-1}_F\circ \sigma_F\mathbb{S}^{d-1}\\
&=\sigma^d_F,
\end{align*}
where the second equality uses the naturalness of $\sigma_{F_1}$ and the third uses the induction hypothesis and Lemma \ref{lem:key}(1). Then we are done with (1). By a similar argument, we prove (2).
\end{proof}

\subsection{The induced homomorphism}

We denote by $Z(\mathcal{A})$ the \emph{center} of $\mathcal{A}$, which is by definition the set consisting of all natural transformations $\lambda\colon {\rm Id}_\mathcal{A}\rightarrow {\rm Id}_\mathcal{A}$. It is a commutative $k$-algebra with multiplication given by the composition of natural transformations.

 For a morphism $f\colon X\rightarrow Y$ and $\lambda\in Z(\mathcal{A})$, we have $f\circ \lambda_X=\lambda_Y\circ f$, both of which will be denoted by $\lambda.f$. In this manner, the Hom space ${\rm Hom}_\mathcal{A}(X, Y)$ is naturally a $Z(\mathcal{A})$-module; moreover, the composition of morphisms is $Z(\mathcal{A})$-bilinear. In other words,  the category $\mathcal{A}$ is naturally $Z(\mathcal{A})$-linear.  We observe that for an endofunctor $F\colon \mathcal{A}\rightarrow \mathcal{A}$, it is $Z(\mathcal{A})$-linear if and only if it is $k$-linear satisfying that $F(\lambda_X)=\lambda_{FX}$ for each object $X$ in $\mathcal{A}$ and $\lambda\in Z(\mathcal{A})$, or equivalently, $F\lambda=\lambda F$ for each $\lambda\in Z(\mathcal{A})$.

The following observation is immediate.

\begin{lem}\label{lem:Z}
 Let $F\colon \mathcal{A}\rightarrow \mathcal{A}$ be an auto-equivalence. Then any natural transformation $F\rightarrow F$ is of the form $\lambda F=F\lambda'$ for  unique $\lambda, \lambda'\in Z(\mathcal{A})$. In case that $F$ is $Z(\mathcal{A})$-linear, we have $\lambda=\lambda'$. \hfill $\square$
\end{lem}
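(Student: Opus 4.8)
The statement to be proved is Lemma~\ref{lem:Z}: for an auto-equivalence $F\colon \mathcal{A}\to\mathcal{A}$, every natural transformation $\eta\colon F\to F$ has the form $\eta=\lambda F=F\lambda'$ for unique $\lambda,\lambda'\in Z(\mathcal{A})$, and $\lambda=\lambda'$ when $F$ is $Z(\mathcal{A})$-linear. The natural approach is to transport $\eta$ along a quasi-inverse of $F$. Fix a quasi-inverse $G$ of $F$ together with natural isomorphisms $\alpha\colon GF\xrightarrow{\sim}\mathrm{Id}_\mathcal{A}$ and $\beta\colon FG\xrightarrow{\sim}\mathrm{Id}_\mathcal{A}$.

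\textbf{Construction of $\lambda$ and $\lambda'$.} Given $\eta\colon F\to F$, first whisker on the right by $G$ to get $\eta G\colon FG\to FG$, and conjugate by $\beta$ to obtain a natural transformation $\lambda:=\beta\circ(\eta G)\circ\beta^{-1}\colon\mathrm{Id}_\mathcal{A}\to\mathrm{Id}_\mathcal{A}$, i.e.\ $\lambda\in Z(\mathcal{A})$. I claim $\lambda F=\eta$. Whiskering the defining equation of $\lambda$ on the right by $F$ gives $\lambda F=(\beta F)\circ(\eta GF)\circ(\beta^{-1}F)$, and using naturality of $\eta$ against the isomorphism $GF\xrightarrow{\sim}\mathrm{Id}$ (equivalently, that $\eta$ whiskered by $GF$ agrees with $\eta$ up to conjugation by that isomorphism), together with the triangle identity relating $\alpha$ and $\beta$, this collapses to $\eta$. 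Symmetrically, whiskering $\eta$ on the \emph{left} by $G$ and conjugating by $\alpha$ produces $\lambda':=\alpha\circ(G\eta)\circ\alpha^{-1}\in Z(\mathcal{A})$ with $F\lambda'=\eta$. Uniqueness of $\lambda$ and $\lambda'$ is immediate: if $\lambda_1F=\lambda_2F$ then whiskering by $G$ and conjugating by $\beta$ gives $\lambda_1=\lambda_2$, and similarly on the other side.

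\textbf{The $Z(\mathcal{A})$-linear case.} If $F$ is $Z(\mathcal{A})$-linear, then by the characterization recalled just before the lemma, $F\mu=\mu F$ for every $\mu\in Z(\mathcal{A})$; applying this with $\mu=\lambda'$ gives $F\lambda'=\lambda'F$, hence $\lambda F=\eta=F\lambda'=\lambda'F$, and uniqueness forces $\lambda=\lambda'$.

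\textbf{Main obstacle.} The only subtlety is purely bookkeeping: making sure the conjugations by $\alpha$, $\beta$ and the whiskerings are composed in a way consistent with the triangle identities, so that $\lambda F=\eta$ really holds on the nose rather than merely up to the chosen isomorphisms. One clean way to avoid juggling both $\alpha$ and $\beta$ is to invoke that $F$, being an equivalence, is full and faithful, so the map $Z(\mathcal{A})\to\mathrm{Nat}(F,F)$, $\mu\mapsto\mu F$ (equivalently $\mu\mapsto F\mu$ after composing with the appropriate isomorphism), is injective by faithfulness and surjective by essential surjectivity together with fullness on natural transformations; this repackages the whiskering argument and makes both existence and uniqueness transparent. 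Either way the proof is short, so in the write-up I would simply exhibit $\lambda$ and $\lambda'$ as above and remark that the verifications are routine diagram chases using the naturality of $\eta$ and the triangle identities.
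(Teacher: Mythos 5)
Your argument is correct; the paper states Lemma~\ref{lem:Z} without proof (as an ``immediate'' observation), and your whiskering-and-conjugation construction of $\lambda$ and $\lambda'$ from a quasi-inverse is exactly the standard verification one would supply. The only point needing care is the one you already flag: choose $\alpha$ and $\beta$ to form an adjoint equivalence so that the triangle identity $\beta F = F\alpha$ holds, which makes $\lambda F=\eta$ and $F\lambda'=\eta$ hold on the nose.
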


We denote by $Z(\mathcal{A})^\times$ the group consisting of invertible elements in $Z(\mathcal{A})$, that is, natural automorphisms of ${\rm Id}_\mathcal{A}$.

The following result is an immediate consequence of Lemma \ref{lem:key}(2).

\begin{cor}\label{cor:S}
Let $\mathcal{A}$ have the Serre duality (\ref{equ:S}), and let $\lambda\in Z(\mathcal{A})^\times$. Then $\lambda\mathbb{S}=\mathbb{S}\lambda$.\hfill $\square$
\end{cor}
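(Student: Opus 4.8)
The plan is to deduce this directly from Lemma \ref{lem:key}(2) applied to the natural isomorphism $\theta=\lambda$ between the auto-equivalences $F=F'={\rm Id}_\mathcal{A}$; note that $\lambda\in Z(\mathcal{A})^\times$ is precisely a natural automorphism of ${\rm Id}_\mathcal{A}$, so this is a legitimate choice of $\theta$.

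First I would identify the commutator isomorphism $\sigma_{{\rm Id}_\mathcal{A}}$. By Lemma \ref{lem:2}, $\sigma_{{\rm Id}_\mathcal{A}}\colon \mathbb{S}\rightarrow \mathbb{S}$ is the \emph{unique} natural isomorphism satisfying ${\rm Tr}_X([\xi])={\rm Tr}_{X}((\sigma_{{\rm Id}_\mathcal{A}})_X.[\xi])$ for all $X$ and $[\xi]\in {\rm Ext}_\mathcal{A}^1(X, \mathbb{S}X)$. Since ${\rm Id}_{\mathbb{S}}$ visibly satisfies this defining relation, the uniqueness statement (which rests on Lemma \ref{lem:1}(3)) forces $\sigma_{{\rm Id}_\mathcal{A}}={\rm Id}_{\mathbb{S}}$.

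Then Lemma \ref{lem:key}(2) gives $\sigma_{{\rm Id}_\mathcal{A}}\circ \lambda\mathbb{S}=\mathbb{S}\lambda\circ \sigma_{{\rm Id}_\mathcal{A}}$, and substituting $\sigma_{{\rm Id}_\mathcal{A}}={\rm Id}_{\mathbb{S}}$ yields $\lambda\mathbb{S}=\mathbb{S}\lambda$, that is, $\lambda_{\mathbb{S}X}=\mathbb{S}(\lambda_X)$ for every object $X$. This is exactly the assertion that $\mathbb{S}$ is compatible with $\lambda$ in the sense discussed before the corollary.

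There is essentially no obstacle here: the only point requiring a word is the computation $\sigma_{{\rm Id}_\mathcal{A}}={\rm Id}_{\mathbb{S}}$, and even that is immediate from the uniqueness built into Lemma \ref{lem:2}. (One should simply be careful to match the whiskering conventions, so that $\lambda\mathbb{S}$ denotes the natural transformation with components $\lambda_{\mathbb{S}X}$ and $\mathbb{S}\lambda$ the one with components $\mathbb{S}(\lambda_X)$; then no invertibility of $\lambda$ beyond what is needed to invoke Lemma \ref{lem:key}(2) enters the argument.)
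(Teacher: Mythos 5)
Your proposal is correct and is exactly the argument the paper intends: the corollary is stated as an immediate consequence of Lemma \ref{lem:key}(2) applied to $\theta=\lambda\colon {\rm Id}_\mathcal{A}\rightarrow {\rm Id}_\mathcal{A}$, and your verification that $\sigma_{{\rm Id}_\mathcal{A}}={\rm Id}_{\mathbb{S}}$ via the uniqueness clause of Lemma \ref{lem:2} is the right (and only) detail to supply.
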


\begin{rem}\label{rem:S} By Corollary \ref{cor:S} it seems that the Serre functor $\mathbb{S}$ on $\mathcal{A}$ is $Z(\mathcal{A})$-linear. However, the author does not know a proof.
\end{rem}

We denote by ${\rm Aut}_k(\mathcal{A})$ the set consisting of isomorphism classes $[F]$ of $k$-linear auto-equivalences $F$ on $\mathcal{A}$, which is a group by the composition of functors. We denote by ${\rm Aut}(\mathcal{A})$ its subgroup formed by $Z(\mathcal{A})$-linear auto-equivalences.

A $k$-linear auto-equivalence $F\colon \mathcal{A}\rightarrow \mathcal{A}$ is \emph{periodic} if there exists a natural isomorphism $\eta\colon F^d\rightarrow {\rm Id}_\mathcal{A}$ for some $d\geq 1$, or equivalently, the element $[F]$ in ${\rm Aut}_k(\mathcal{A})$ has finite order, in which case $\eta$ is called a \emph{periodicity isomorphism} of $F$ with order $d$. The periodicity isomorphism $\eta$ is \emph{compatible} provided that $\eta F=F\eta$. The order of $[F]$ in  ${\rm Aut}_k(\mathcal{A})$  will be called the \emph{order} of $F$.

The category $\mathcal{A}$ is said to have a \emph{periodic Serre duality} provided that  it has a Serre duality with the Serre functor $\mathbb{S}$ periodic. In this case, we fix a  periodicity isomorphism of order $d$
\begin{align}
\eta\colon \mathbb{S}^d \longrightarrow {\rm Id}_\mathcal{A}.
\end{align}
We observe that the order of $\mathbb{S}$ divides $d$. We mention that if $\mathcal{A}$ has periodic Serre duality, then $\mathbf{D}^b(\mathcal{A})$ is fractionally Calabi-Yau.

 Let $F\colon \mathcal{A}\rightarrow \mathcal{A}$ be a $Z(\mathcal{A})$-linear auto-equivalence. In particular, it is $k$-linear. Consider the following natural isomorphisms
 \begin{align}\label{equ:3}
 t_F\colon F\stackrel{F\eta^{-1}}\longrightarrow F\mathbb{S}^d\stackrel{\sigma_F^d}\longrightarrow \mathbb{S}^dF\stackrel{\eta F} \longrightarrow F.
 \end{align}
 By Lemma \ref{lem:Z} there exists a unique $\kappa(F)\in Z(\mathcal{A})^\times$ such that $t_F=\kappa(F) F$. We claim that $\kappa(F)=\kappa(F')$ for any given natural isomorphism $\theta\colon F\rightarrow F'$. Then this gives rise to a well-defined map
\begin{align}\label{equ:lambda}
\kappa\colon {\rm Aut}(\mathcal{A})\longrightarrow Z(\mathcal{A})^\times, \quad [F]\mapsto \kappa(F).
\end{align}

 For the claim, we apply Proposition \ref{prop:1}(2) to infer that $\theta\circ t_F=t_{F'}\circ \theta$; here, we also use the naturalness of $\theta$ and $\eta$. Then we have $\theta\circ (\kappa(F)F)=(\kappa(F')F')\circ \theta$. Since $\kappa(F)$ lies in $Z(\mathcal{A})$, we have $\theta\circ (\kappa(F)F)=(\kappa(F)F')\circ \theta$, and thus $(\kappa(F)F')\circ \theta=(\kappa(F')F')\circ \theta$. Recall that $\theta$ is an isomorphism and that $F'$ is an auto-equivalence. It follows that $\kappa(F)=\kappa(F')$.

 \begin{prop}\label{prop:2}
 Let $\mathcal{A}$ have a periodic Serre duality. We keep the notation as above. Then the map $\kappa$ in (\ref{equ:lambda}) is a group homomorphism.
 \end{prop}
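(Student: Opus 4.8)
The plan is to verify directly that $\kappa(F_1 F_2) = \kappa(F_1)\kappa(F_2)$ for two $Z(\mathcal{A})$-linear auto-equivalences $F_1$ and $F_2$, using the multiplicativity of the $d$-th commutator isomorphism furnished by Proposition \ref{prop:1}(1). Set $F = F_1 F_2$. By definition, $t_F = (\eta F)\circ(\sigma_F^d)\circ(F\eta^{-1})$, and we must compare this with the composite built from $t_{F_1}$ and $t_{F_2}$. The natural move is to expand $\sigma_F^d = (\sigma_{F_1}^d F_2)\circ(F_1 \sigma_{F_2}^d)$ and then insert, in the middle of the composite $F_1 \sigma_{F_2}^d$, a cancelling pair $(F_1 \eta F_2)^{-1}\circ(F_1\eta F_2)$ applied in the appropriate place, so as to assemble $F_1 t_{F_2}$ as a sub-composite. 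Schematically, starting from $F_1 F_2$, I would route through $F_1 F_2 \mathbb{S}^d$, then use $F_1(\sigma_{F_2}^d)$ to reach $F_1 \mathbb{S}^d F_2$, then $F_1(\eta F_2)$ to return to $F_1 F_2$ — this whole sub-route is exactly $F_1 t_{F_2} = F_1(\kappa(F_2) F_2)$; and then apply $t_{F_1}$ whiskered by $F_2$ on the remaining part. The bookkeeping needs one application of the interchange law (the naturalness of $\eta$ against $F_2$, i.e. $\mathbb{S}^d F_2$ versus $F_2 \mathbb{S}^d$ commuting appropriately) to see that the two halves glue correctly along the intermediate object.

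Once the composite is reorganized as $t_F = (t_{F_1} F_2)\circ(F_1 t_{F_2})$ — more precisely $(\kappa(F_1) F_1 F_2)\circ(F_1 \kappa(F_2) F_2)$ — I would invoke the hypothesis that $F_1$ is $Z(\mathcal{A})$-linear: this gives $F_1 \kappa(F_2) = \kappa(F_2) F_1$ as natural transformations $F_1 \to F_1$ (equivalently $F_1(\kappa(F_2)_X) = \kappa(F_2)_{F_1 X}$), so that $F_1 \kappa(F_2) F_2 = \kappa(F_2) F$. Combined with $\kappa(F_1) F_1 F_2 = \kappa(F_1) F$ and the commutativity of $Z(\mathcal{A})$, we obtain $t_F = \kappa(F_1)\kappa(F_2) F$, whence $\kappa(F) = \kappa(F_1)\kappa(F_2)$ by the uniqueness in Lemma \ref{lem:Z}. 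One also checks $\kappa(\mathrm{Id}_\mathcal{A}) = 1$ trivially, since $\sigma_{\mathrm{Id}}^d = \mathrm{Id}$ and the $\eta^{-1}$, $\eta$ cancel; together with well-definedness on isoclasses (already established before the proposition) this makes $\kappa$ a group homomorphism.

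The main obstacle I anticipate is purely the two-dimensional diagram chase: keeping track of which functor is being whiskered on which side at each stage, and making sure the single interchange-law step is applied to the correct pair of natural transformations. In particular one must be careful that $\eta$ is a periodicity isomorphism of $\mathbb{S}$, not assumed compatible ($\eta\mathbb{S} = \mathbb{S}\eta$) nor assumed to interact simply with $F_1$ or $F_2$ — but since $F_1, F_2$ are $Z(\mathcal{A})$-linear rather than arbitrary, the only genuinely needed commutation is $F_i$ past elements of $Z(\mathcal{A})$, which is exactly the $Z(\mathcal{A})$-linearity hypothesis, and the naturalness of $\eta$ and $\sigma_{F_i}^d$ handles the rest. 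I would present the computation as a single displayed chain of equalities between composites of whiskered natural transformations, annotating each step with "naturalness of $\eta$", "Proposition \ref{prop:1}(1)", or "$Z(\mathcal{A})$-linearity of $F_1$", much in the style of the proof of Remark \ref{rem:1}, and then conclude via Lemma \ref{lem:Z}.
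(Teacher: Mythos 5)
Your proposal is correct and follows essentially the same route as the paper: decompose $t_{F_1F_2}=(t_{F_1}F_2)\circ(F_1t_{F_2})$ via Proposition \ref{prop:1}(1) (the cancelling pair $(F_1\eta F_2)^{-1}\circ(F_1\eta F_2)$ you insert is exactly what makes that identity literal, and in fact no interchange-law step is needed beyond it), then use the $Z(\mathcal{A})$-linearity of $F_1$ to commute $\kappa(F_2)$ past $F_1$ and conclude by the uniqueness in Lemma \ref{lem:Z}. The only difference is that you spell out the whiskering bookkeeping that the paper leaves implicit.
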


 We will refer to the group homomorphism $\kappa\colon {\rm Aut}(\mathcal{A})\rightarrow Z(\mathcal{A})^\times$ as the \emph{induced homomorphism} of the periodicity isomorphism $\eta$.

 We observe that the periodicity isomorphism $\eta$ is unique up to an element in $Z(\mathcal{A})^\times$. More precisely, for another periodicity isomorphism $\eta'\colon \mathbb{S}^d\rightarrow {\rm Id}_\mathcal{A}$, there is a unique $\lambda\in Z(\mathcal{A})^\times$ with $\eta=\lambda \circ \eta'$.  It follows from the $Z(\mathcal{A})$-linearity of $F$ and Corollary \ref{cor:S} that the induced homomorphism  of $\eta'$ coincides with $\kappa$. In other words, the induced homomorphism is independent of the choice of the periodicity isomorphism.

 \begin{proof}
 Let $F_1$ and $F_2$ be two $Z(\mathcal{A})$-linear auto-equivalences on $\mathcal{A}$. Set $F=F_1F_2$. For the result, it suffices to claim that $\kappa(F)=\kappa(F_1)\kappa(F_2)$. By Proposition \ref{prop:1}(1) we have the first equality of the following identity
 \begin{align*}t_F &=(t_{F_1}F_2)\circ (F_1t_{F_2})\\
                  &= (\kappa(F_1)F_1F_2)\circ (F_1\kappa(F_2)F_2)\\
                  &=(\kappa(F_1)F)\circ (\kappa(F_2)F)\\
                  &=(\kappa(F_1)\kappa(F_2))F,
                  \end{align*}
 where the third equality uses the $Z(\mathcal{A})$-linearity of $F_1$. Recall that $t_F=\kappa(F)F$ and that $F$ is an auto-equivalence. By the uniqueness statement in Lemma \ref{lem:Z} we infer the claim.
 \end{proof}

The following result concerns the compatibility of the periodicity isomorphism $\eta\colon \mathbb{S}^d\rightarrow {\rm Id}_\mathcal{A}$.

\begin{lem}\label{lem:peri}
Let $\mathcal{A}$ have a Serre duality which is perfect and periodic. Take any  periodicity isomorphism $\eta \colon \mathbb{S}^d\rightarrow {\rm Id}_\mathcal{A}$ for some $d\geq 1$. Then $\eta$ is compatible.
\end{lem}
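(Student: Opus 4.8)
The plan is to show that $\eta \mathbb{S} = \mathbb{S}\eta$ as natural transformations $\mathbb{S}^{d+1} \to \mathbb{S}$, by exploiting the uniqueness properties of commutator isomorphisms and the perfectness hypothesis. The key observation is that perfectness says $\sigma_{\mathbb{S}} = {\rm Id}_{\mathbb{S}^2}$, and by Proposition \ref{prop:1}(1) (applied iteratively with $F_1 = F_2 = \mathbb{S}$, etc.) this forces $\sigma_{\mathbb{S}}^d = {\rm Id}_{\mathbb{S}^{d+1}}$ for every $d \geq 1$; indeed $\sigma_{\mathbb{S}^m}$ is built out of copies of $\sigma_{\mathbb{S}}$ whiskered by powers of $\mathbb{S}$, so they are all identities. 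This is the ingredient that will let us transfer the coherence from $\mathbb{S}$ to $\eta$.

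First I would apply Proposition \ref{prop:1}(2) to the natural isomorphism $\eta\colon \mathbb{S}^d \to {\rm Id}_\mathcal{A}$, viewed as a natural isomorphism between the $k$-linear auto-equivalences $F = \mathbb{S}^d$ and $F' = {\rm Id}_\mathcal{A}$. This gives, for each $e \geq 1$,
\[
\sigma^e_{{\rm Id}_\mathcal{A}} \circ \eta\mathbb{S}^e = \mathbb{S}^e\eta \circ \sigma^e_{\mathbb{S}^d}.
\]
Now $\sigma_{{\rm Id}_\mathcal{A}} = {\rm Id}_{\mathbb{S}}$ (immediately from the defining equation (\ref{equ:1}), since $F = {\rm Id}$ makes both sides literally equal), and hence $\sigma^e_{{\rm Id}_\mathcal{A}} = {\rm Id}_{\mathbb{S}^e}$ by the inductive definition. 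On the other hand, $\sigma^e_{\mathbb{S}^d}$ is an identity for all $e$ by the perfectness argument of the previous paragraph: $\mathbb{S}^d$ is a power of $\mathbb{S}$, so Proposition \ref{prop:1}(1) expresses $\sigma_{\mathbb{S}^d}$ as a composite of whiskered copies of $\sigma_{\mathbb{S}} = {\rm Id}$, whence $\sigma_{\mathbb{S}^d} = {\rm Id}$, and then $\sigma^e_{\mathbb{S}^d} = {\rm Id}$ again by the inductive definition. Taking $e = 1$, the displayed equation collapses to $\eta\mathbb{S} = \mathbb{S}\eta$, which is exactly the compatibility of $\eta$.

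The one step that needs care — and which I expect to be the main (though modest) obstacle — is the bookkeeping that $\sigma_{\mathbb{S}^m} = {\rm Id}_{\mathbb{S}^{m+1}}$ for all $m \geq 1$ given only $\sigma_{\mathbb{S}} = {\rm Id}_{\mathbb{S}^2}$. One proves this by induction on $m$: write $\mathbb{S}^m = \mathbb{S}\cdot\mathbb{S}^{m-1}$ and apply Proposition \ref{prop:1}(1) to get $\sigma_{\mathbb{S}^m} = (\sigma_{\mathbb{S}}\mathbb{S}^{m-1}) \circ (\mathbb{S}\sigma_{\mathbb{S}^{m-1}})$; both factors are identities, the first by perfectness and the second by the induction hypothesis (whiskered by $\mathbb{S}$). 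One must simply check that the whiskering of an identity natural transformation is again an identity, which is automatic. Everything else is a direct application of the already-established naturality statements in Proposition \ref{prop:1}, and no new computation with trace maps is required.
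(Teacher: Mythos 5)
Your proof is correct and follows essentially the same route as the paper's: perfectness gives $\sigma_{\mathbb{S}}={\rm Id}_{\mathbb{S}^2}$, iterating the multiplicativity of commutator isomorphisms (Lemma \ref{lem:key}(1)) gives $\sigma_{\mathbb{S}^d}={\rm Id}_{\mathbb{S}^{d+1}}$, and applying the naturality statement (Lemma \ref{lem:key}(2), your Proposition \ref{prop:1}(2) with $e=1$) to $\eta\colon\mathbb{S}^d\rightarrow{\rm Id}_\mathcal{A}$ yields $\eta\mathbb{S}=\mathbb{S}\eta$. Your explicit verification that $\sigma_{{\rm Id}_\mathcal{A}}={\rm Id}_{\mathbb{S}}$ is a detail the paper leaves implicit, but the argument is the same.
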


\begin{proof}
By the assumption  $\sigma_\mathbb{S}={\rm Id}_{\mathbb{S}^2}$. By iterating Lemma \ref{lem:key}(1) we infer that $\sigma_{\mathbb{S}^d}$ equals the identity transformation on ${\mathbb{S}^{d+1}}$. We apply Lemma \ref{lem:key}(2) to $\eta \colon \mathbb{S}^d\rightarrow {\rm Id}_\mathcal{A}$ and deduce $\eta \mathbb{S}=\mathbb{S}\eta$, that is, the periodicity isomorphism $\eta$ is compatible.
\end{proof}

\section{The Serre duality on the category of equivariant objects}

In this section, we recall from \cite{De,RR, DGNO} some notation on the category of equivariant objects with respect to a finite group action. We compute explicitly its Serre duality. Special cases are exploited with examples. In particular, we study an abelian category with a periodic Serre duality and its equivariantization.

Let $G$ be a finite group, which is written multiplicatively and whose unit is denoted by $e$. Let $\mathcal{A}$ be a $k$-linear abelian category, which is Hom-finite and skeletally small.

\subsection{Finite group actions and equivariantizations}

We recall the notion of a group action on a category; see \cite{De,RR,DGNO}.   A \emph{$k$-linear action} of $G$ on $\mathcal{A}$ consists of the data $\{F_g, \varepsilon_{g, h}|\; g, h\in G\}$, where each $F_g\colon \mathcal{A}\rightarrow \mathcal{A}$ is a $k$-linear auto-equivalence and each $\varepsilon_{g, h}\colon F_gF_h\rightarrow F_{gh}$ is a natural isomorphism such that the following $2$-cocycle condition
\begin{align}\label{equ:2-coc}
\varepsilon_{gh, l}\circ \varepsilon_{g,h}F_l=\varepsilon_{g, hl}\circ F_g\varepsilon_{h,l}
\end{align}
holds for all $g, h, l\in G$.

The given $k$-linear action $\{F_g, \varepsilon_{g, h}|\; g, h\in G\}$ is \emph{strict} provided that each $F_g\colon \mathcal{A}\rightarrow \mathcal{A}$ is an automorphism and each isomorphism $\varepsilon_{g, h}$ is the identity. Therefore, a strict action coincides with  a group homomorphism from $G$ to the $k$-linear automorphism group of $\mathcal{A}$.

In what follows, we assume that there is a $k$-linear $G$-action  $\{F_g, \varepsilon_{g, h}|\; g, h\in G\}$ on $\mathcal{A}$. We observe that there exists a unique natural isomorphism $u\colon F_e\rightarrow {\rm Id}_\mathcal{A}$, called the \emph{unit} of the action, satisfying $\varepsilon_{e,e}=F_eu$.  Taking $h=e$ in (\ref{equ:2-coc}) we obtain that
\begin{align}\label{equ:2-coc2}
\varepsilon_{g,e}F_l=F_g\varepsilon_{e,l}.
 \end{align}
 Taking $g=e$ in (\ref{equ:2-coc2}) we infer that $uF_l=\varepsilon_{e,l}$; in particular, we have $uF_e=\varepsilon_{e,e}$. Taking $l=e$ in (\ref{equ:2-coc2}) and using the identity $\varepsilon_{e,e}=uF_e$, we infer that $\varepsilon_{g,e}=F_gu$.

Let $g\in G$. For each $d\geq 1$, we define a natural isomorphism $\varepsilon_g^d\colon F_g^d\rightarrow F_{g^d}$ as follows, where $F_g^d$ denotes the $d$-th power of $F_g$. We define $\varepsilon_g^1={\rm Id}_{F_g}$ and $\varepsilon_g^2=\varepsilon_{g, g}$. If $d>2$, we define $\varepsilon_g^d=\varepsilon_{g^{d-1}, g}\circ \varepsilon_g^{d-1}F_g$. It follows by (\ref{equ:2-coc}) and induction that \begin{align}\label{equ:2}
\varepsilon_g^{d}=\varepsilon_{g, g^{d-1}}\circ F_g\varepsilon_g^{d-1}.
\end{align}

We assume that $g^d=e$ for some $d\geq 1$. Consider the following isomorphisms
 $$\theta\colon F_g^d\stackrel{\varepsilon_{g}^d}\longrightarrow F_{g^d}=F_e\stackrel{u}\longrightarrow {\rm Id}_\mathcal{A}.$$
We claim that $\theta F_g=F_g\theta$. Indeed, by $uF_g=\varepsilon_{e, g}=\varepsilon_{g^d, g}$,  we have $\theta F_g=\varepsilon_g^{d+1}$. By (\ref{equ:2}) we have $\varepsilon_g^{d+1}=\varepsilon_{g, g^d}\circ F_g\varepsilon_g^{d}=\varepsilon_{g, e}\circ F_g\varepsilon_g^{d}$. Recall that $\varepsilon_{g,e}=F_gu$. It follows that $\varepsilon_{g}^{d+1}=F_g\theta$. Then we are done.

The above claim implies the following result.

\begin{lem}\label{lem:per}
Let $\{F_g, \varepsilon_{g, h}|\; g, h\in G\}$ be a $k$-linear action of $G$ on $\mathcal{A}$. Then each auto-equivalence $F_g$ has a compatible periodicity isomorphism.\hfill $\square$
\end{lem}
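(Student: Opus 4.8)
The plan is to unpack the final statement, Lemma \ref{lem:per}, and notice that it is essentially a packaging of the computation performed in the paragraph immediately preceding it. Fix $g\in G$ and let $d\geq 1$ be the order of $g$, so that $g^d=e$. The auto-equivalence $F_g$ is $k$-linear by hypothesis, and the displayed isomorphism
\[
\theta\colon F_g^d\stackrel{\varepsilon_g^d}{\longrightarrow}F_{g^d}=F_e\stackrel{u}{\longrightarrow}{\rm Id}_\mathcal{A}
\]
is a natural isomorphism, hence a periodicity isomorphism of $F_g$ in the sense of Subsection 2.3. What remains is precisely the compatibility condition $\theta F_g=F_g\theta$, and this is exactly the claim established in the paragraph before the lemma.

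First I would recall that computation in outline so that the proof is self-contained. One uses $uF_g=\varepsilon_{e,g}$, which is one of the identities derived from the $2$-cocycle condition (\ref{equ:2-coc}); combined with $g^d=e$ this gives $uF_g=\varepsilon_{g^d,g}$, and therefore $\theta F_g=(uF_g)\circ(\varepsilon_g^d F_g)=\varepsilon_{g^d,g}\circ\varepsilon_g^d F_g=\varepsilon_g^{d+1}$, the last equality being the defining recursion $\varepsilon_g^{d+1}=\varepsilon_{g^d,g}\circ\varepsilon_g^d F_g$. On the other hand, the alternative recursion (\ref{equ:2}) gives $\varepsilon_g^{d+1}=\varepsilon_{g,g^d}\circ F_g\varepsilon_g^d=\varepsilon_{g,e}\circ F_g\varepsilon_g^d$, and since $\varepsilon_{g,e}=F_gu$ (another consequence of the cocycle identities, via (\ref{equ:2-coc2})), this equals $(F_gu)\circ(F_g\varepsilon_g^d)=F_g(u\circ\varepsilon_g^d)=F_g\theta$. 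Hence $\theta F_g=\varepsilon_g^{d+1}=F_g\theta$, so $\theta$ is a compatible periodicity isomorphism.

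Since there is genuinely no further obstacle — the lemma is a corollary of the preceding paragraph, and indeed the text says so ("The above claim implies the following result") — the only thing to watch is bookkeeping: making sure the order $d$ used is such that $g^d=e$, and invoking the two recursions for $\varepsilon_g^d$ in the two directions. The "hard part", such as it is, was already done in verifying the cocycle consequences $uF_l=\varepsilon_{e,l}$ and $\varepsilon_{g,e}=F_gu$ and the two recursion formulas for $\varepsilon_g^d$; the proof of the lemma itself is then a one-line assembly. I would therefore write the proof simply as: "Take $d\geq 1$ with $g^d=e$ and let $\theta$ be the composite above; by the claim established before the lemma, $\theta F_g=F_g\theta$, so $\theta$ is a compatible periodicity isomorphism of $F_g$." — which is why the authors felt entitled to leave it with a $\square$.
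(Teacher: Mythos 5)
Your proof is correct and is essentially identical to the paper's: the lemma is proved by the claim established in the preceding paragraph, namely that $\theta=u\circ\varepsilon_g^d\colon F_g^d\rightarrow {\rm Id}_\mathcal{A}$ satisfies $\theta F_g=\varepsilon_g^{d+1}=F_g\theta$, using the two recursions for $\varepsilon_g^{d+1}$ together with the cocycle consequences $uF_g=\varepsilon_{e,g}$ and $\varepsilon_{g,e}=F_gu$. Your write-up just makes this assembly explicit.
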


The following example is a partial converse of Lemma \ref{lem:per}.

\begin{exm}\label{exm:1}
{\rm Let $F\colon \mathcal{A}\rightarrow \mathcal{A}$ be a $k$-linear auto-equivalence with a compatible periodicity isomorphism $\theta\colon F^d\rightarrow {\rm Id}_\mathcal{A}$  of order $d$. We recall that the compatibility means  $F\theta=\theta F$.

Denote by $C_d=\langle g\; |\; g^d=1\rangle=\{e=g^0, g, \cdots, g^{d-1}\}$ the cyclic group of order $d$. Then $g^ig^j=g^{[i+j]}$ for $0\leq i, j\leq d-1$, where $[i+j]=i+j$ if $i+j\leq d-1$ and $[i+j]=i+j-d$ otherwise.

We now construct a $k$-linear $C_d$-action on $\mathcal{A}$ that is \emph{induced} by $F$ and $\theta$. For each $0\leq i\leq d-1$, we define $F_{g^i}=F^i$, where $F^0={\rm Id}_\mathcal{A}$. For $0\leq i, j\leq d-1$, we define the natural isomorphism $\varepsilon_{g^i, g^j}\colon F_{g^i}F_{g^j}\rightarrow F_{g^ig^j}=F_{g^{[i+j]}}$ as follows: $\varepsilon_{g^i, g^j}={\rm Id}_{F^{i+j}}$ if $i+j\leq d-1$, and $\varepsilon_{g^i, g^j}=F^{i+j-d}\theta$ otherwise.

We claim that the following $2$-cocycle condition
\begin{align}
\varepsilon_{g^{[i+j]}, g^l}\circ \varepsilon_{g^i,g^j}F_{g^l}=\varepsilon_{g^i, g^{[j+l]}}\circ F_{g^i}\varepsilon_{g^j,g^l}
\end{align}
holds. Then we are done with the construction.

Indeed, we have to verify it by the four cases depending on whether $i+j$ and $j+l$ are less than $d-1$ or not. Then for the two cases where $i+j\geq d$ we have to use the condition $\theta F^l=F^l\theta$ for any $0\leq l\leq d-1$.

We observe that the constructed $C_d$-action on $\mathcal{A}$ is strict if and only if $F$ is an automorphism such that $F^d={\rm Id}_\mathcal{A}$ and that $\theta$ is the identity transformation.}
\end{exm}

 Let $\{F_g, \varepsilon_{g, h}|\; g, h\in G\}$ be a $k$-linear $G$-action on $\mathcal{A}$. A \emph{$G$-equivariant object} in $\mathcal{A}$ is a pair $(X, \alpha)$, where $X$ is an object in $\mathcal{A}$ and $\alpha$ assigns for each $g\in G$ an isomorphism $\alpha_g\colon X\rightarrow F_gX$ subject to the relations
 \begin{align}\label{equ:rel}
 \alpha_{gg'}=(\varepsilon_{g,g'})_X \circ F_g(\alpha_{g'}) \circ \alpha_g.
 \end{align}
 These relations imply that $\alpha_e=u^{-1}_X$. A morphism $f\colon  (X, \alpha)\rightarrow (Y, \beta)$ between  two $G$-equivariant objects is a morphism $f\colon X\rightarrow Y$ in $\mathcal{A}$ such that
  $\beta_g\circ f=F_g(f)\circ \alpha_g$ for all $g\in G$. This gives rise to
   the category $\mathcal{A}^G$ of $G$-equivariant objects, and the \emph{forgetful functor}
   $U\colon \mathcal{A}^G\rightarrow \mathcal{A}$ defined by $U(X, \alpha)=X$ and $U(f)=f$. The process forming the category $\mathcal{A}^G$ of equivariant objects is known as the \emph{equivariantization} of $\mathcal{A}$ with respect to the group action; see \cite{DGNO}.

We observe that $\mathcal{A}^G$ is an abelian category; indeed, a sequence of $G$-equivariant objects is exact if and only if it is exact in $\mathcal{A}$. We further observe that $\mathcal{A}^G$ is naturally $k$-linear, which is Hom-finite.

\begin{exm}\label{exm:2}
{\rm Let $F\colon \mathcal{A}\rightarrow \mathcal{A}$ be a $k$-linear periodic auto-equivalence. We assume that there is a compatible periodicity isomorphism $\theta\colon F^d\rightarrow {\rm Id}_\mathcal{A}$; here, $d$ is a multiple of  the order of $F$. We consider the $C_d$-action on $\mathcal{A}$ induced by $F$ and $\theta$ in Example \ref{exm:1}. Then a $C_d$-equivariant object $(X, \alpha)$ is completely determined by the isomorphism $\alpha_g\colon X\rightarrow FX$, which satisfies $F^{d-1}(\alpha_g) \circ \cdots \circ F(\alpha_g)\circ \alpha_g=\theta^{-1}_X$; a morphism $f\colon (X, \alpha)\rightarrow (Y, \beta)$ of $C_d$-equivariant objects is a morphism $f\colon X\rightarrow Y$ satisfying $\beta_g\circ f=F(f)\circ \alpha_g$.

In case that $d$ equals the order of $F$, we denote the category $\mathcal{A}^{C_d}$ of $C_d$-equivariant objects by $\mathcal{A}\sslash F$. We now explain the reason for this notation.

Recall that the \emph{orbit category} $\mathcal{A}/F$ of $\mathcal{A}$ with respect to $F$ and $\theta$  is defined as follows; compare \cite[Subsection 3.1]{RR} and \cite{Kel}. The objects of $\mathcal{A}/F$ are the same with $\mathcal{A}$. For two objects $X, Y$ the Hom space is given by
$${\rm Hom}_{\mathcal{A}/F}(X, Y)=\bigoplus_{i=0}^{d-1} {\rm Hom}_\mathcal{A}(X, F^iY),$$
 whose elements are denoted by $(f_i)\colon X\rightarrow Y$ with $f_i\in {\rm Hom}_\mathcal{A}(X, F^iY)$ for each $0\leq i\leq d-1$. The composition of two morphisms $(f_i)\colon X\rightarrow Y$ and $(g_i)\colon Y\rightarrow Z$ is given by $(h_i)\colon X\rightarrow Z$, where $h_i=\sum_{\{0\leq j, l\leq d-1\; |\; [j+l]=i\}} \varepsilon_{g^j, g^l}\circ F^j(g_l)\circ f_j$. The following well-known fact is implicit in \cite[Lemma 4.4]{Chen}:  if $d$ is invertible in $k$, then $\mathcal{A}\sslash F$ is equivalent to the idempotent completion of the orbit category $\mathcal{A}/F$.

We mention that the categories $\mathcal{A}/F$ and $\mathcal{A}\sslash F$ depend on the choice of the compatible periodicity isomorphism $\theta$.}
\end{exm}

Let $\{F_g, \varepsilon_{g, h}|\; g, h\in G\}$ be a $k$-linear $G$-action on $\mathcal{A}$ as above. Let $(X, \alpha)$ and $(Y, \beta)$ be two objects in $\mathcal{A}^G$. Then the Hom space ${\rm Hom}_\mathcal{A}(X, Y)$ carries a $k$-linear $G$-action \emph{associated} to these two objects: for $g\in G$ and $f\colon X\rightarrow Y$, $g.f=\beta_g^{-1}\circ F_g(f)\circ \alpha_g$. There is a $k$-linear isomorphism induced by the forgetful functor $U$
\begin{align}\label{iso:1}
{\rm Hom}_{\mathcal{A}^G} ((X, \alpha), (Y, \beta))\stackrel{\sim}\longrightarrow {\rm Hom}_\mathcal{A}(X, Y)^G.
\end{align}
Here, for a  $k$-vector space $V$ with a $k$-linear $G$-action we denote by $V^G$ the invariant subspace.

Similarly, given two objects  $(X, \alpha)$ and $(Y, \beta)$ in $\mathcal{A}^G$, the Ext space ${\rm Ext}^1_\mathcal{A}(X, Y)$ carries a $k$-linear $G$-action \emph{associated} to these two objects: for $g\in G$ and $[\xi] \in {\rm Ext}^1_\mathcal{A}(X, Y)$, $g.[\xi]=\beta_g^{-1}.[F_g(\xi)].\alpha_g$. The forgetful functor $U$ induces a $k$-linear map
\begin{align}\label{map:1}
{\rm Ext}^1_{\mathcal{A}^G} ((X, \alpha), (Y, \beta))\longrightarrow {\rm Ext}^1_\mathcal{A}(X, Y)^G.
\end{align}
Indeed, for an extension $\xi\colon 0\rightarrow (Y, \beta)\rightarrow (E, \gamma)\rightarrow (X, \alpha)\rightarrow 0$ in $\mathcal{A}^G$ the corresponding extension $U(\xi)$ in $\mathcal{A}$  satisfies $\beta_g.[F_gU(\xi)]=[U(\xi)].\alpha_g$ by the following commutative diagram and Lemma \ref{lem:McL}(1). In other words,  $[U(\xi)]$ lies in the invariant subspace ${\rm Ext}^1_\mathcal{A}(X, Y)^G$.
\[\xymatrix{
U(\xi)\colon & 0\ar[r] & Y \ar[d]^-{\beta_g} \ar[r] & E\ar[r]\ar[d]^-{\gamma_g} & X\ar[r] \ar[d]^-{\alpha_g} & 0\\
F_gU(\xi)\colon & 0\ar[r] & F_gY \ar[r] & F_gE\ar[r] & F_gX\ar[r] & 0
}\]

The following fact is well known.

\begin{lem}\label{lem:inv}
Assume that the order $|G|$ of $G$ is invertible in $k$. Let $(X, \alpha)$ and $(Y, \beta)$ be as above. Then the map  (\ref{map:1}) is an isomorphism.
\end{lem}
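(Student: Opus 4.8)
The plan is to show that the forgetful map $(\ref{map:1})$ is both injective and surjective under the hypothesis that $|G|$ is invertible in $k$, by exhibiting an averaging operator that descends extensions in $\mathcal{A}$ to extensions in $\mathcal{A}^G$. The key point is that an element of ${\rm Ext}^1_\mathcal{A}(X, Y)^G$ is an extension $\xi\colon 0\to Y\to E\to X\to 0$ in $\mathcal{A}$ together with, for each $g$, an isomorphism realising the equivariance relation $\beta_g.[F_g(\xi)]=[U(\xi)].\alpha_g$; we must promote $E$ to a genuine equivariant object whose underlying extension is $[\xi]$.

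First I would establish injectivity. Suppose $\xi$ is an extension in $\mathcal{A}^G$ whose image in ${\rm Ext}^1_\mathcal{A}(X,Y)$ is zero, so $U(\xi)$ splits in $\mathcal{A}$; I want to produce an $\mathcal{A}^G$-splitting. Pick a section $s\colon X\to E$ in $\mathcal{A}$ and average it: using the equivariant structures $\gamma$ on $E$ and $\alpha$ on $X$, form $\bar s=\frac{1}{|G|}\sum_{g\in G} \gamma_g^{-1}\circ F_g(s)\circ \alpha_g$. A direct check, using the cocycle condition $(\ref{equ:2-coc})$ and the equivariance relations $(\ref{equ:rel})$ for $\alpha$ and $\gamma$, shows that $\bar s$ is still a section of $E\to X$ (since each $\gamma_g^{-1}\circ F_g(s)\circ\alpha_g$ is a section, and sections form an affine space so their average is a section) and that $\bar s$ is a morphism in $\mathcal{A}^G$. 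Hence $\xi$ splits in $\mathcal{A}^G$, proving injectivity.

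Next, surjectivity. Start with $[\eta]\in {\rm Ext}^1_\mathcal{A}(X,Y)^G$ represented by $\eta\colon 0\to Y\xrightarrow{\iota} E\xrightarrow{\pi} X\to 0$. The invariance $g.[\eta]=[\eta]$ means $\beta_g^{-1}.[F_g(\eta)].\alpha_g=[\eta]$, i.e. there is an isomorphism of extensions $\phi_g\colon E\to F_gE$ fitting into the obvious commutative diagram with $\beta_g$ on the left and $\alpha_g$ on the right. The $\phi_g$ need not satisfy the cocycle relation $(\ref{equ:rel})$ on the nose, but the discrepancy $(\varepsilon_{g,g'})_E^{-1}\circ\,?$ lies in ${\rm Hom}_\mathcal{A}(X, Y)$ (it is an automorphism of the extension $E$ inducing the identity on $X$ and $Y$, hence of the form ${\rm id}+\iota\circ c_{g,g'}\circ\pi$), giving a $2$-cocycle of $G$ with values in the $G$-module ${\rm Hom}_\mathcal{A}(X,Y)$; since $|G|$ is invertible, $H^2(G, {\rm Hom}_\mathcal{A}(X,Y))=0$, so after modifying each $\phi_g$ by a suitable coboundary we may assume the $\phi_g$ define an equivariant structure $\gamma$ on $E$. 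The resulting $(E,\gamma)$ is an object of $\mathcal{A}^G$, and $\iota,\pi$ become morphisms in $\mathcal{A}^G$ by construction, so we obtain a preimage of $[\eta]$. (Alternatively, and perhaps more cleanly, one can argue that the two functors ${\rm Ext}^1_{\mathcal{A}^G}$ and $\big({\rm Ext}^1_\mathcal{A}\big)^G$ agree because $U$ has an exact left (and right) adjoint — induction $X\mapsto\bigoplus_g F_gX$ with its canonical equivariant structure — and the unit/counit split up to the factor $|G|$; then $U$ detects and reflects exactness and the averaged adjunction gives the inverse to $(\ref{map:1})$.)

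The main obstacle is the surjectivity step: one must be careful that the averaging or the cohomological modification of the $\phi_g$ is compatible simultaneously with the three conditions --- being an isomorphism $E\to F_gE$, being compatible with $\iota$ and $\pi$ (i.e. with $\beta_g$ and $\alpha_g$), and satisfying the cocycle relation $(\ref{equ:rel})$. Tracking the unit $u\colon F_e\to{\rm Id}_\mathcal{A}$ and the normalisation $\alpha_e=u_X^{-1}$, $\beta_e=u_Y^{-1}$ correctly throughout is the routine but error-prone part; invoking the vanishing of $H^2(G,-)$ (equivalently, that $\mathcal{A}^G\to\mathcal{A}$ is separable when $|G|$ is invertible) is what makes it go through.
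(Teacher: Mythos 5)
Your proof is correct, but it takes a genuinely different route from the paper's. The paper disposes of the lemma in three lines: it identifies ${\rm Ext}^1_\mathcal{A}(X,Y)$ with a Hom space in $\mathbf{D}^b(\mathcal{A})$, observes that the $G$-action extends to $\mathbf{D}^b(\mathcal{A})$ and that the isomorphism (\ref{iso:1}) is valid for any $k$-linear category, and then cites \cite[Proposition 4.5]{Chen}, which compares the equivariantization of $\mathbf{D}^b(\mathcal{A})$ with $\mathbf{D}^b(\mathcal{A}^G)$ via a separable monad. You instead work entirely at the abelian level: injectivity by averaging a splitting section (a Reynolds-operator computation of the same kind that proves (\ref{iso:1})), and surjectivity by choosing isomorphisms $\phi_g\colon E\to F_gE$ over $\alpha_g$ and under $\beta_g$, observing that the failure of (\ref{equ:rel}) is a $2$-cocycle $c_{g,h}$ valued in ${\rm Hom}_\mathcal{A}(X,Y)$ (automorphisms of an extension inducing the identity on both ends are exactly those of the form ${\rm id}_E+\iota\circ c\circ\pi$), and killing it using $H^2(G,-)=0$, which holds by Maschke since $|G|$ is invertible. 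The details you flag as routine do close: naturality of $\varepsilon_{g,h}$ shows that $(\varepsilon_{g,h})_E\circ F_g(\phi_h)\circ\phi_g$ is again an isomorphism of extensions over $\alpha_{gh}$ and under $\beta_{gh}$, the $2$-cocycle identity for $c$ follows from (\ref{equ:2-coc}), and the modification $\phi_g\mapsto\phi_g\circ({\rm id}_E+\iota\circ b_g\circ\pi)$ stays within such isomorphisms. Your approach buys a self-contained, explicit construction of the preimage of an invariant extension class; the paper's buys brevity and a statement valid for all ${\rm Ext}^i$ at once, at the price of an external citation. Note that your parenthetical alternative (induction as a two-sided adjoint of $U$, with unit and counit split up to the factor $|G|$) is essentially the separability argument behind \cite[Proposition 4.5]{Chen}, so that variant is the one closest to the paper's actual proof.
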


 \begin{proof}
Recall that the Ext spaces in $\mathcal{A}$ are isomorphic to certain Hom spaces in the bounded derived category $\mathbf{D}^b(\mathcal{A})$. The $G$-action on $\mathcal{A}$  induces naturally a $G$-action on $\mathbf{D}^b(\mathcal{A})$.  We observe that the isomorphism (\ref{iso:1}) is valid for any $k$-linear category, in particular, it is valid for $\mathbf{D}^b(\mathcal{A})$. Then the isomorphism (\ref{map:1}) follows from \cite[Proposition 4.5]{Chen} and the corresponding isomorphism (\ref{iso:1}).
 \end{proof}

 \subsection{An explicit Serre duality for the equivariantization}

We assume that the abelian category $\mathcal{A}$ has the Serre duality (\ref{equ:S}). Consider the $k$-linear $G$-action $\{F_g, \varepsilon_{g, h}|\; g, h\in G\}$ as above.

 We apply Lemma \ref{lem:2} to each $k$-linear auto-equivalence $F_g$ and  obtain the commutator isomorphism $\sigma_{F_g}\colon F_g\mathbb{S}\rightarrow \mathbb{S}F_g$ for each $g\in G$, which will be denoted by $\sigma_g$. Let $(X, \alpha)$ be a $G$-equivariant object.  For each $g\in G$, we consider the following isomorphism
 $$\tilde{\alpha}_g=(\sigma_g)^{-1}_X\circ \mathbb{S}(\alpha_g)\colon \mathbb{S}X\longrightarrow F_g\mathbb{S}X.$$

\begin{lem}
Keep the notation as above. Then the isomorphisms $\tilde{\alpha}_g$'s satisfy the identity $\tilde{\alpha}_{gh}=(\varepsilon_{g,h})_{\mathbb{S}X} \circ F_g(\tilde{\alpha}_h)\circ \tilde{\alpha}_g$. In other words, the pair $(\mathbb{S}X, \tilde{\alpha})$ is a $G$-equivariant object in $\mathcal{A}$.
\end{lem}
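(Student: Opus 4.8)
The plan is to verify the cocycle identity $\tilde\alpha_{gh}=(\varepsilon_{g,h})_{\mathbb S X}\circ F_g(\tilde\alpha_h)\circ\tilde\alpha_g$ by expanding both sides using the definitions $\tilde\alpha_g=(\sigma_g)_X^{-1}\circ\mathbb S(\alpha_g)$ and the relation (\ref{equ:rel}) for the original equivariant structure $\alpha$, namely $\alpha_{gh}=(\varepsilon_{g,h})_X\circ F_g(\alpha_h)\circ\alpha_g$. The left-hand side is $(\sigma_{gh})_X^{-1}\circ\mathbb S(\alpha_{gh})=(\sigma_{gh})_X^{-1}\circ\mathbb S((\varepsilon_{g,h})_X)\circ\mathbb S F_g(\alpha_h)\circ\mathbb S(\alpha_g)$, using functoriality of $\mathbb S$. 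So the whole computation reduces to a compatibility statement about the commutator isomorphisms and the $2$-cocycle $\varepsilon$, after which the $\mathbb S(\alpha_h)$ and $\mathbb S(\alpha_g)$ terms can be matched up by naturality.

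The key technical input is the relation expressing $\sigma_{gh}$ in terms of $\sigma_g$, $\sigma_h$ and $\varepsilon_{g,h}$. By Lemma \ref{lem:key}(1), for $F=F_gF_h$ (as opposed to $F_{gh}$) one has $\sigma_{F_gF_h}=(\sigma_g F_h)\circ(F_g\sigma_h)$; and by Lemma \ref{lem:key}(2) applied to the natural isomorphism $\varepsilon_{g,h}\colon F_gF_h\to F_{gh}$, one has $\sigma_{gh}\circ\varepsilon_{g,h}\mathbb S=\mathbb S\varepsilon_{g,h}\circ\sigma_{F_gF_h}$. Combining these gives
\[
\sigma_{gh}\circ(\varepsilon_{g,h})_{\mathbb S X}=\mathbb S((\varepsilon_{g,h})_X)\circ(\sigma_g)_{F_hX}\circ F_g((\sigma_h)_X),
\]
so inverting,
\[
(\sigma_{gh})_X^{-1}\circ\mathbb S((\varepsilon_{g,h})_X)=(\varepsilon_{g,h})_{\mathbb S X}\circ F_g((\sigma_h)_X)^{-1}\circ(\sigma_g)_{F_hX}^{-1}.
\]
This is the heart of the argument; everything else is bookkeeping.

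First I would record the two consequences of Lemma \ref{lem:key} above and derive the displayed inverse identity. Then I would substitute it into the expanded left-hand side $(\sigma_{gh})_X^{-1}\circ\mathbb S((\varepsilon_{g,h})_X)\circ\mathbb S F_g(\alpha_h)\circ\mathbb S(\alpha_g)$, obtaining $(\varepsilon_{g,h})_{\mathbb S X}\circ F_g((\sigma_h)_X)^{-1}\circ(\sigma_g)_{F_hX}^{-1}\circ\mathbb S F_g(\alpha_h)\circ\mathbb S(\alpha_g)$. Next I would use naturality of $\sigma_g\colon F_g\mathbb S\to\mathbb S F_g$ applied to the morphism $\alpha_h\colon X\to F_hX$, which yields $(\sigma_g)_{F_hX}^{-1}\circ\mathbb S F_g(\alpha_h)=F_g\mathbb S(\alpha_h)\circ(\sigma_g)_X^{-1}$. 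After this substitution the expression becomes $(\varepsilon_{g,h})_{\mathbb S X}\circ F_g((\sigma_h)_X)^{-1}\circ F_g\mathbb S(\alpha_h)\circ(\sigma_g)_X^{-1}\circ\mathbb S(\alpha_g)=(\varepsilon_{g,h})_{\mathbb S X}\circ F_g((\sigma_h)_X^{-1}\circ\mathbb S(\alpha_h))\circ((\sigma_g)_X^{-1}\circ\mathbb S(\alpha_g))=(\varepsilon_{g,h})_{\mathbb S X}\circ F_g(\tilde\alpha_h)\circ\tilde\alpha_g$, which is exactly the right-hand side. Finally I would note that each $\tilde\alpha_g$ is an isomorphism (being a composite of two isomorphisms), so $(\mathbb S X,\tilde\alpha)$ is a genuine $G$-equivariant object.

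The main obstacle is getting the variance and placement of subscripts right in the combination of Lemma \ref{lem:key}(1) and (2): one must be careful that $\sigma_{F_gF_h}$ refers to the composite auto-equivalence while $\sigma_{gh}=\sigma_{F_{gh}}$ refers to $F_{gh}$, and that the horizontal composites $\sigma_g F_h$, $F_g\sigma_h$, $\varepsilon_{g,h}\mathbb S$, $\mathbb S\varepsilon_{g,h}$ are evaluated at the correct objects (at $X$ this means $(\sigma_g)_{F_hX}$, $F_g((\sigma_h)_X)$, $(\varepsilon_{g,h})_{\mathbb S X}$, $\mathbb S((\varepsilon_{g,h})_X)$ respectively). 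Once the identity for $(\sigma_{gh})_X^{-1}\circ\mathbb S((\varepsilon_{g,h})_X)$ is correctly in hand, the remainder is a short diagram chase using only naturality of $\sigma_g$ and functoriality of $F_g$ and $\mathbb S$.
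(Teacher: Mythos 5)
Your proposal is correct and follows essentially the same route as the paper: the displayed identity $(\sigma_{gh})_X^{-1}\circ\mathbb S((\varepsilon_{g,h})_X)=(\varepsilon_{g,h})_{\mathbb S X}\circ F_g((\sigma_h)_X^{-1})\circ(\sigma_g)_{F_hX}^{-1}$ is exactly the claim (\ref{equ:8}) the paper isolates, derived from the same combination of Lemma \ref{lem:key}(1) and (2). The only difference is that you expand $\tilde\alpha_{gh}$ and work toward the right-hand side while the paper starts from $(\varepsilon_{g,h})_{\mathbb S X}\circ F_g(\tilde\alpha_h)\circ\tilde\alpha_g$ and contracts, which is immaterial.
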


\begin{proof}
We claim that the following identity holds
\begin{align}\label{equ:8}
(\varepsilon_{g, h})_{\mathbb{S}X}\circ F_g((\sigma_h)^{-1}_X)\circ (\sigma_g)^{-1}_{F_hX}=(\sigma_{gh})_X^{-1}\circ \mathbb{S}((\varepsilon_{g,h})_X).\end{align}
Indeed, by Lemma \ref{lem:key}(1) we have $(\sigma_g)_{F_hX}\circ F_g((\sigma_h)_X)=(\sigma_{F_gF_h})_X$. Applying Lemma \ref{lem:key}(2) to $\varepsilon_{g,h}$, we have $(\sigma_{gh})_X\circ (\varepsilon_{g,h})_{\mathbb{S}X}=\mathbb{S}((\varepsilon_{g,h})_X)\circ (\sigma_{F_gF_h})_X$. Putting these two identities together, we infer the claim.

We complete the proof by the following identity:
\begin{align*}
(\varepsilon_{g, h})_{\mathbb{S}X}\circ F_g(\tilde{\alpha}_h)\circ \tilde{\alpha}_g &=(\varepsilon_{g, h})_{\mathbb{S}X}\circ F_g((\sigma_h)^{-1}_X)\circ F_g\mathbb{S}(\alpha_h)\circ (\sigma_g)^{-1}_X\circ \mathbb{S}(\alpha_g)\\
&=(\varepsilon_{g, h})_{\mathbb{S}X}\circ F_g((\sigma_h)^{-1}_X)\circ (\sigma_g)^{-1}_{F_hX}\circ \mathbb{S}F_g(\alpha_h)\circ \mathbb{S}(\alpha_g)\\
&=(\sigma_{gh})_X^{-1}\circ \mathbb{S}((\varepsilon_{g,h})_X) \circ \mathbb{S}F_g(\alpha_h)\circ \mathbb{S}(\alpha_g)\\
&=(\sigma_{gh})_X^{-1} \circ \mathbb{S}((\varepsilon_{g,h})_X \circ F_g(\alpha_h)\circ \alpha_g)\\
&=(\sigma_{gh})_X^{-1} \circ \mathbb{S}(\alpha_{gh})=\tilde{\alpha}_{gh}.
\end{align*}
Here, the second equality uses the naturalness of $\sigma_g$, and the third uses the claim above.
\end{proof}

We define a functor $\mathbb{S}^G\colon \mathcal{A}^G\rightarrow \mathcal{A}^G$ by sending $(X, \alpha)$ to $(\mathbb{S}X, \tilde{\alpha})$, a morphism $f\colon (X, \alpha)\rightarrow (Y, \beta)$ to $\mathbb{S}(f)\colon (\mathbb{S}X, \tilde{\alpha})\rightarrow (\mathbb{S}Y, \tilde{\beta})$. Since $\mathbb{S}$ is a $k$-linear auto-equivalence on $\mathcal{A}$, it follows that $\mathbb{S}^G$ is a $k$-linear auto-equivalence on $\mathcal{A}^G$.

Let $(X, \alpha)$ and $(Y, \beta)$ be two objects in $\mathcal{A}^G$. Consider the following map
$$\psi\colon {\rm Hom}_{\mathcal{A}^G}((Y, \beta), \mathbb{S}^G(X, \alpha))\longrightarrow D{\rm Ext}^1_{\mathcal{A}^G}((X, \alpha), (Y, \beta))$$
defined by $\psi(f)([\xi])={\rm Tr}_X(U(f.[\xi]))$, where $f\colon (Y, \beta)\rightarrow \mathbb{S}^G(X, \alpha)$ and $[\xi]\in {\rm Ext}^1_{\mathcal{A}^G}((X, \alpha), (Y, \beta))$, and $U\colon \mathcal{A}^G\rightarrow \mathcal{A}$ is the forgetful functor.  We observe that $\psi(f)([\xi])=\phi_{X, Y}^{-1}(U(f))([U(\xi)])$. It follows that $\psi$ is natural in both $(X, \alpha)$ and $(Y, \beta)$.

The main result describes explicitly the Serre duality on $\mathcal{A}^G$; it is somehow analogous to \cite[Lemma 2.1 a)]{Kel} for orbit categories.

\begin{thm}\label{thm:1}
Let $G$ be a finite group acting on $\mathcal{A}$. Assume that the order $|G|$ of $G$ is invertible in $k$. Then the above map $\psi$ is an isomorphism. Consequently, the category $\mathcal{A}^G$ has a Serre duality given by $\psi^{-1}$
 \begin{align}\label{equ:S-equi}
 D{\rm Ext}^1_{\mathcal{A}^G}((X, \alpha), (Y, \beta))\stackrel{\sim}\longrightarrow {\rm Hom}_{\mathcal{A}^G}((Y, \beta), \mathbb{S}^G(X, \alpha)) \end{align}
where $\mathbb{S}^G\colon \mathcal{A}^G\rightarrow \mathcal{A}^G$ is the Serre functor.
\end{thm}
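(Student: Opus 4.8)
The plan is to exhibit $\psi$ as a composite of known isomorphisms and then check that $\psi^{-1}$ is bifunctorial, so that it qualifies as a Serre duality on $\mathcal{A}^G$. The starting observation is the identity $\psi(f)([\xi]) = \phi_{X,Y}^{-1}(U(f))([U(\xi)])$, already recorded before the statement. This says that $\psi$ fits into a commutative square whose other three sides are: the forgetful map ${\rm Hom}_{\mathcal{A}^G}((Y,\beta),\mathbb{S}^G(X,\alpha)) \to {\rm Hom}_\mathcal{A}(Y,\mathbb{S}X)$, the original Serre duality $\phi_{X,Y}^{-1}\colon {\rm Hom}_\mathcal{A}(Y,\mathbb{S}X) \to D{\rm Ext}^1_\mathcal{A}(X,Y)$, and the $k$-dual of the forgetful map on Ext. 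So the first main step is to identify the image of the forgetful map ${\rm Hom}_{\mathcal{A}^G}((Y,\beta),\mathbb{S}^G(X,\alpha)) \to {\rm Hom}_\mathcal{A}(Y,\mathbb{S}X)$ with the $G$-invariant subspace ${\rm Hom}_\mathcal{A}(Y,\mathbb{S}X)^G$, where the $G$-action on the right is the one associated to the equivariant objects $(Y,\beta)$ and $\mathbb{S}^G(X,\alpha) = (\mathbb{S}X,\tilde\alpha)$ via isomorphism (\ref{iso:1}).

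The crux is then a compatibility check: I must verify that, under the identification (\ref{iso:1}), the forgetful map on Hom from $\mathcal{A}^G$ lands in exactly those $f\colon Y \to \mathbb{S}X$ that are $G$-invariant for the $\tilde\alpha$-$\beta$ action, and conversely. This is immediate from the definition of morphisms in $\mathcal{A}^G$. Next, I need that Serre duality $\phi_{X,Y}$ on $\mathcal{A}$ intertwines the $G$-action on ${\rm Hom}_\mathcal{A}(Y,\mathbb{S}X)$ (associated to $(Y,\beta)$ and $(\mathbb{S}X,\tilde\alpha)$) with the $G$-action on $D{\rm Ext}^1_\mathcal{A}(X,Y)$ (the $k$-dual of the action associated to $(X,\alpha)$ and $(Y,\beta)$). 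Here is where the definition of $\tilde\alpha_g = (\sigma_g)^{-1}_X \circ \mathbb{S}(\alpha_g)$ pays off, and where the defining property (\ref{equ:1}) of the commutator isomorphism $\sigma_g$ enters: unwinding both actions and using bifunctoriality of $\phi$ together with ${\rm Tr}_X([\xi]) = {\rm Tr}_{F_gX}((\sigma_g)_X.[F_g(\xi)])$, one shows $\phi_{X,Y}(g.\mu) = g.\phi_{X,Y}(\mu)$ for $\mu \in D{\rm Ext}^1_\mathcal{A}(X,Y)$ — equivalently $\langle g.[\xi], g.f\rangle = \langle [\xi], f\rangle$, which follows from Lemma \ref{lem:1}(1)--(2) plus (\ref{equ:1}). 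Granting this, $\phi_{X,Y}$ restricts to an isomorphism ${\rm Hom}_\mathcal{A}(Y,\mathbb{S}X)^G \xrightarrow{\sim} (D{\rm Ext}^1_\mathcal{A}(X,Y))^G = D({\rm Ext}^1_\mathcal{A}(X,Y)_G)$, and since $|G|$ is invertible, coinvariants and invariants agree, giving $D({\rm Ext}^1_\mathcal{A}(X,Y)^G)$.

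Finally I assemble: the left vertical (forgetful on Hom) is an isomorphism onto invariants by (\ref{iso:1}); $\phi_{X,Y}^{-1}$ restricts to an isomorphism between the two invariant subspaces by the compatibility just established; and the right vertical is the $k$-dual of the isomorphism (\ref{map:1}), which is an isomorphism onto invariants by Lemma \ref{lem:inv} since $|G|$ is invertible in $k$. Composing these three isomorphisms with the top side $\psi$ forces $\psi$ itself to be an isomorphism. Bifunctoriality of $\psi$ (hence of $\psi^{-1}$) in both variables was already noted, coming from bifunctoriality of $\phi$ and functoriality of $U$; and $\mathbb{S}^G$ is a $k$-linear auto-equivalence on $\mathcal{A}^G$ as observed. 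Hence $\psi^{-1}$ is a Serre duality on $\mathcal{A}^G$ with Serre functor $\mathbb{S}^G$. I expect the main obstacle to be the compatibility step — showing $\phi$ intertwines the two $G$-actions — since it requires carefully matching the definition of $\tilde\alpha_g$ against the trace identity (\ref{equ:1}) and Lemma \ref{lem:1}, whereas everything else is a formal diagram chase or a citation.
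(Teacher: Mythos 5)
Your proposal is correct and follows essentially the same route as the paper: both reduce the claim to showing that $\phi_{X,Y}^{-1}$ intertwines the $G$-action on ${\rm Hom}_\mathcal{A}(Y,\mathbb{S}X)$ (associated to $(Y,\beta)$ and $(\mathbb{S}X,\tilde\alpha)$) with the contragredient action on $D{\rm Ext}^1_\mathcal{A}(X,Y)$, using Lemma \ref{lem:1} and the trace identity (\ref{equ:1}) for $\sigma_g$, and then pass to $G$-invariants via (\ref{iso:1}) and Lemma \ref{lem:inv}. The only difference is cosmetic: you phrase the identification $(D{\rm Ext}^1_\mathcal{A}(X,Y))^G\simeq D({\rm Ext}^1_\mathcal{A}(X,Y)^G)$ through coinvariants, which the paper leaves implicit.
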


\begin{proof}
We recall that associated to the objects $(Y, \beta)$ and $\mathbb{S}^G(X, \alpha)$, the $G$-action on  ${\rm Hom}_\mathcal{A}(Y, \mathbb{S}X)$ is given by $g.f=\tilde{\alpha}^{-1}_g\circ F_g(f)\circ \beta_g=\mathbb{S}(\alpha_g^{-1})\circ (\sigma_g)_X\circ F_g(f)\circ \beta_g$. On the other hand, associated to $(X, \alpha)$ and $(Y, \beta)$,  the $G$-action on  ${\rm Ext}_\mathcal{A}^1(X, Y)$ is given by $g.[\xi]=\beta_g^{-1}.[\xi].\alpha_g$. Then $G$ acts on $D{\rm Ext}_\mathcal{A}^1(X, Y)$ by the contragredient action.

We claim that the isomorphism
$$\phi^{-1}=\phi_{X, Y}^{-1}\colon {\rm Hom}_\mathcal{A}(Y, \mathbb{S}X)\longrightarrow D{\rm Ext}_\mathcal{A}^1(X, Y)$$ is compatible with these $G$-actions. Recall that $\phi^{-1}(f)([\xi])={\rm Tr}_X(f.[\xi])$ for each $[\xi]\in {\rm Ext}_\mathcal{A}^1(X, Y)$. Take $g\in G$. We assume that
\begin{align}\label{equ:7}
\beta_g.[\xi].\alpha_g^{-1}=F_g([\xi'])
 \end{align}
 for some $[\xi']\in {\rm Ext}_\mathcal{A}^1(X, Y)$. It follows that $[\xi]=g.[\xi']$ and thus $[\xi']=g^{-1}.[\xi]$. The claim amounts to the equation $\phi^{-1}(g.f)=g.\phi^{-1}(f)$.

The following identity proves the claim:
\begin{align*}
\phi^{-1}(g.f)([\xi]) &={\rm Tr}_X((g.f).[\xi])\\
                    &  ={\rm Tr}_X((\mathbb{S}(\alpha_g^{-1})\circ (\sigma_g)_X\circ F_g(f)\circ \beta_g).[\xi])\\
                    &={\rm Tr}_{F_gX}(((\sigma_g)_X\circ F_g(f)\circ \beta_g).[\xi].\alpha_g^{-1})\\
                     &={\rm Tr}_{F_gX}((\sigma_g)_X. F_g(f.[\xi']))\\
                     & ={\rm Tr}_X(f.[\xi'])\\
                     &=\phi^{-1}(f)(g^{-1}.[\xi])=(g.\phi^{-1}(f))([\xi]).
\end{align*}
Here, for the third equality we use Lemma \ref{lem:1}(2), for the fourth we use (\ref{equ:7}), and for the fifth we apply (\ref{equ:1}) to $F_g$. The last equality follows from the definition of the contragredient action.

It follows from the claim that the isomorphism $\phi^{-1}$ induces isomorphisms $${\rm Hom}_\mathcal{A}(Y, \mathbb{S}X)^G \stackrel{\sim}\longrightarrow (D{\rm Ext}_\mathcal{A}^1(X, Y))^G\simeq D({\rm Ext}_\mathcal{A}^1(X, Y)^G)$$ of
$G$-invariants. Then we are done by (\ref{iso:1}) and Lemma \ref{lem:inv}. We observe that the above isomorphism is identified with the map $\psi$. Then we are done.
\end{proof}

\begin{rem}\label{rem:2}
The trace map corresponding to (\ref{equ:S-equi}) is given by
$${\rm Tr}_{(X, \alpha)}\colon {\rm Ext}^1_{\mathcal{A}^G}((X, \alpha), \mathbb{S}^G(X, \alpha))\longrightarrow k$$
such that ${\rm Tr}_{(X, \alpha)}([\xi])={\rm Tr}_X([U(\xi)])$. In view of (\ref{equ:4}), we have the following immediate observation: if the Serre duality on $\mathcal{A}$ is perfect, so is the one on $\mathcal{A}^G$.
\end{rem}

\subsection{Special case I: the trivial Serre functor}

Let $\{F_g, \varepsilon_{g, h}|\; g, h\in G\}$ a $k$-linear $G$-action on $\mathcal{A}$. We will work in the following setup.

\vskip 5pt

\noindent{\bf Setup A:}\quad There is a central element $g\in G$ such that $F_g=\mathbb{S}$, the Serre functor on $\mathcal{A}$. Moreover, for each $h\in G$ we have $\varepsilon_{g, h}^{-1}\circ\varepsilon_{h, g}=\sigma_{h}$.

 \vskip 5pt
  Here, we recall that $\sigma_h=\sigma_{F_h}\colon F_h\mathbb{S}\rightarrow \mathbb{S}F_h$ denotes the commutator isomorphism associated to $F_h$.

 \begin{prop}\label{prop:A}
 Assume that we are in Setup A. Then the auto-equivalence  $\mathbb{S}^G$ on $\mathcal{A}^G$ is isomorphic to the identity functor. In particular, if further the order $|G|$ of $G$ is invertible in $k$, then the Serre functor on  $\mathcal{A}^G$ is isomorphic to the identity functor.
 \end{prop}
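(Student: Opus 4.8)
The plan is to exhibit an explicit natural isomorphism $\mathbb{S}^G \to \mathrm{Id}_{\mathcal{A}^G}$. Given a $G$-equivariant object $(X,\alpha)$, the element $\alpha_g \colon X \to F_g X = \mathbb{S}X$ is an isomorphism in $\mathcal{A}$, so the natural candidate is to declare $\eta_{(X,\alpha)} = \alpha_g^{-1} \colon \mathbb{S}^G(X,\alpha) = (\mathbb{S}X, \tilde\alpha) \to (X,\alpha)$. Two things must be checked: first, that $\alpha_g^{-1}$ is a morphism in $\mathcal{A}^G$ from $(\mathbb{S}X,\tilde\alpha)$ to $(X,\alpha)$, i.e. that $\alpha_h \circ \alpha_g^{-1} = F_h(\alpha_g^{-1}) \circ \tilde\alpha_h$ for every $h \in G$; second, that $(X,\alpha) \mapsto \alpha_g^{-1}$ is natural in $(X,\alpha)$. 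Naturality is essentially immediate: if $f \colon (X,\alpha)\to (Y,\beta)$ is equivariant, then $\beta_g \circ f = F_g(f)\circ \alpha_g = \mathbb{S}(f)\circ \alpha_g$, which rearranges to $f \circ \alpha_g^{-1} = \beta_g^{-1}\circ \mathbb{S}(f)$, exactly the naturality square for $\eta$ against $\mathbb{S}^G(f) = \mathbb{S}(f)$. The final sentence of the proposition then follows at once from Theorem \ref{thm:1}: when $|G|$ is invertible in $k$, $\mathbb{S}^G$ is a Serre functor on $\mathcal{A}^G$, and we have just shown it is isomorphic to $\mathrm{Id}_{\mathcal{A}^G}$.

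The substantive step, and the one I expect to be the main obstacle, is verifying the equivariance condition for $\alpha_g^{-1}$. Recall $\tilde\alpha_h = (\sigma_h)_X^{-1}\circ \mathbb{S}(\alpha_h) \colon \mathbb{S}X \to F_h\mathbb{S}X$. We must show
\begin{align*}
\alpha_h \circ \alpha_g^{-1} = F_h(\alpha_g^{-1})\circ (\sigma_h)_X^{-1}\circ \mathbb{S}(\alpha_h).
\end{align*}
Here is where both hypotheses of Setup A enter. Using the cocycle relations \eqref{equ:rel} for $(X,\alpha)$ applied to the pairs $(h,g)$ and $(g,h)$, we get two expressions for $\alpha_{hg} = \alpha_{gh}$ (the latter equality since $g$ is central):
\begin{align*}
(\varepsilon_{h,g})_X \circ F_h(\alpha_g)\circ \alpha_h = \alpha_{hg} = \alpha_{gh} = (\varepsilon_{g,h})_X\circ F_g(\alpha_h)\circ \alpha_g = (\varepsilon_{g,h})_X\circ \mathbb{S}(\alpha_h)\circ \alpha_g,
\end{align*}
using $F_g = \mathbb{S}$. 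Rearranging the outer equality gives $F_h(\alpha_g)\circ \alpha_h \circ \alpha_g^{-1} = (\varepsilon_{h,g})_X^{-1}\circ (\varepsilon_{g,h})_X\circ \mathbb{S}(\alpha_h)$. Now the second hypothesis of Setup A, namely $\varepsilon_{g,h}^{-1}\circ \varepsilon_{h,g} = \sigma_h$, says precisely $(\varepsilon_{h,g})_X^{-1}\circ(\varepsilon_{g,h})_X = (\sigma_h)_X^{-1}$, so the right-hand side becomes $(\sigma_h)_X^{-1}\circ \mathbb{S}(\alpha_h)$. Applying $F_h(\alpha_g^{-1})$ on the left of both sides yields exactly the desired identity $\alpha_h\circ\alpha_g^{-1} = F_h(\alpha_g^{-1})\circ(\sigma_h)_X^{-1}\circ\mathbb{S}(\alpha_h)$. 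The only delicate bookkeeping is keeping the directions of $\varepsilon_{g,h}$ versus its inverse straight, and confirming that the centrality of $g$ is what legitimizes equating $\alpha_{hg}$ with $\alpha_{gh}$; once the two cocycle relations are written down side by side, Setup A has been tailored so that the two error terms cancel against the commutator isomorphism, and the verification closes.
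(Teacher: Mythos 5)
Your proposal is correct and follows essentially the same route as the paper: the paper's proof exhibits the isomorphism $\delta_{(X,\alpha)}=\alpha_g\colon (X,\alpha)\to(\mathbb{S}X,\tilde\alpha)$ (the inverse of your $\eta$) and verifies equivariance by the same chain of identities, using the relation (\ref{equ:rel}) for the pairs $(g,h)$ and $(h,g)$, the centrality of $g$, and the hypothesis $\varepsilon_{g,h}^{-1}\circ\varepsilon_{h,g}=\sigma_h$. Your bookkeeping of the direction of $\sigma_h$ and the naturality check (which the paper leaves to the reader) are both accurate.
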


 \begin{proof}
We  construct an explicit isomorphism $\delta\colon {\rm Id}_{\mathcal{A}^G}\rightarrow \mathbb{S}^G$. For an object $(X, \alpha)$, we define $\delta_{(X, \alpha)}=\alpha_g\colon (X, \alpha)\rightarrow (\mathbb{S}X, \tilde{\alpha})$. We claim that $\delta_{(X, \alpha)}$ is a morphism of equivariant objects, that is, for each $h\in G$ we have $\tilde{\alpha}_h\circ \alpha_g=F_h(\alpha_g)\circ \alpha_h$.

 Indeed, we have the following identity
 \begin{align*}
 \tilde{\alpha}_h\circ \alpha_g&= \sigma_h^{-1}\circ \mathbb{S}(\alpha_h)\circ \alpha_g\\
                               &= \varepsilon_{h, g}^{-1}\circ \varepsilon_{g, h}\circ \mathbb{S}(\alpha_h)\circ \alpha_g\\
                               &=\varepsilon_{h, g}^{-1}\circ \alpha_{gh}\\
                               &=\varepsilon_{h, g}^{-1}\circ \alpha_{hg}\\
                               &=F_h(\alpha_g)\circ \alpha_h,
                              \end{align*}
 where the second and fourth equalities use the assumptions in Setup A, and the third and final ones use (\ref{equ:rel}).

 The naturalness of $\delta$ in $(X, \alpha)$ is direct to verify. The last statement follows from Theorem \ref{thm:1}.
 \end{proof}

\begin{rem}\label{rem:3}
We replace $\mathbb{S}^G$ by ${\rm Id}_{\mathcal{A}^G}$ via the isomorphism $\delta$. Then the new trace map on $\mathcal{A}^G$ is given by
$${\rm Tr}'_{(X, \alpha)}\colon {\rm Ext}^1_{\mathcal{A}^G}((X, \alpha), (X, \alpha))\longrightarrow k$$
such that ${\rm Tr}'_{(X, \alpha)}([\xi])={\rm Tr}_X(\alpha_g.[U(\xi)])$; compare Remarks \ref{rem:1} and \ref{rem:2}.
\end{rem}

We consider a special case of Proposition \ref{prop:A}. It justifies a statement in \cite{Lentalk}, that is, factoring out the Serre functor yields an abelian category with a trivial Serre functor; compare \cite[Subsection 2.4]{Kel}.

We assume that $\mathcal{A}$ has a Serre duality which is perfect and periodic of order $d$. Here, $d$ equals the order of $\mathbb{S}$. By Lemma \ref{lem:peri} there is a compatible periodicity isomorphism $\eta\colon \mathbb{S}^d\rightarrow {\rm Id}_\mathcal{A}$. We consider the $C_d$-action on $\mathcal{A}$ induced by $\mathbb{S}$ and $\eta$ in Example \ref{exm:1}. Recall the notation $\mathcal{A}\sslash \mathbb{S}=\mathcal{A}^{C_d}$ from Example \ref{exm:2}.

\begin{cor}\label{cor:A}
Let  $\mathcal{A}$ have a Serre duality which is perfect and periodic of order $d$. Assume that $d$ is invertible in $k$. Then the Serre functor on the category $\mathcal{A}\sslash \mathbb{S}$ is isomorphic to the identity functor.
\end{cor}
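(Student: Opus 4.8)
The plan is to verify that the data of the $C_d$-action induced by $\mathbb{S}$ and $\eta$ on $\mathcal{A}$ places us exactly in Setup A, and then to invoke Proposition \ref{prop:A}. Concretely, let $g$ be the chosen generator of $C_d$, so that $F_g = \mathbb{S}$ by construction (Example \ref{exm:1}). Since $C_d$ is abelian, $g$ is automatically central, so the first requirement of Setup A is met. The work is therefore to check the cocycle identity $\varepsilon_{g,h}^{-1}\circ \varepsilon_{h,g} = \sigma_h$ for every $h = g^i \in C_d$, where $\sigma_h = \sigma_{F_{g^i}} = \sigma_{\mathbb{S}^i}$ is the commutator isomorphism of the $i$-th power of $\mathbb{S}$.

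First I would compute the left-hand side using the explicit formula for $\varepsilon_{g^i,g^j}$ from Example \ref{exm:1}. Writing $h = g^i$ with $0 \leq i \leq d-1$: we have $\varepsilon_{g, g^i} = \mathrm{Id}_{\mathbb{S}^{i+1}}$ if $i+1 \leq d-1$ and $\varepsilon_{g,g^{d-1}} = \eta$ (as $i+1-d = 0$); symmetrically $\varepsilon_{g^i,g} = \mathrm{Id}_{\mathbb{S}^{i+1}}$ in the same range and $\varepsilon_{g^{d-1},g} = \eta$. So for $i \leq d-2$ the left-hand side $\varepsilon_{g^i,g}^{-1}\circ\varepsilon_{g,g^i}$ is the identity on $\mathbb{S}^{i+1}$, while for $i = d-1$ it is $\eta^{-1}\circ\eta = \mathrm{Id}$ on $\mathbb{S}^d = \mathrm{Id}_\mathcal{A}$. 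In all cases the left-hand side is the identity natural transformation. So the claim reduces to showing $\sigma_{\mathbb{S}^i} = \mathrm{Id}_{\mathbb{S}^{i+1}}$ for all $i$.

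This last point is precisely where perfectness enters, and it is the heart of the argument. By definition, the Serre duality on $\mathcal{A}$ being perfect means $\sigma_\mathbb{S} = \mathrm{Id}_{\mathbb{S}^2}$. Iterating Lemma \ref{lem:key}(1) with $F_1 = \mathbb{S}$, $F_2 = \mathbb{S}^{i-1}$ gives $\sigma_{\mathbb{S}^i} = (\sigma_{\mathbb{S}}\mathbb{S}^{i-1}) \circ (\mathbb{S}\sigma_{\mathbb{S}^{i-1}})$; an easy induction on $i$, using $\sigma_\mathbb{S} = \mathrm{Id}$ at each stage, shows $\sigma_{\mathbb{S}^i}$ is the identity transformation on $\mathbb{S}^{i+1}$ — this is exactly the computation already performed inside the proof of Lemma \ref{lem:peri}. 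Hence $\varepsilon_{g,h}^{-1}\circ\varepsilon_{h,g} = \mathrm{Id} = \sigma_h$ for all $h$, and Setup A holds. Since $d = |C_d|$ is assumed invertible in $k$, Proposition \ref{prop:A} applies and gives that the Serre functor $\mathbb{S}^{C_d}$ on $\mathcal{A}\sslash\mathbb{S} = \mathcal{A}^{C_d}$ is isomorphic to the identity functor.

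I do not anticipate a serious obstacle here: the only subtlety is keeping the case distinction in the formula for $\varepsilon_{g^i,g^j}$ straight (whether indices wrap around mod $d$), and recognizing that the wrap-around factors $\eta$ occurring in $\varepsilon_{g,h}$ and $\varepsilon_{h,g}$ cancel against each other rather than surviving — so that the verification of Setup A bottoms out entirely in the statement $\sigma_{\mathbb{S}^i} = \mathrm{Id}$, which perfectness supplies via Lemma \ref{lem:key}(1). One should also note in passing that Lemma \ref{lem:peri} already guarantees the compatibility of $\eta$, so the $C_d$-action of Example \ref{exm:1} is legitimately defined; this is the only place the hypothesis "perfect" is used beyond the $\sigma_{\mathbb{S}^i}$ computation.
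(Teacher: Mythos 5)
Your proposal is correct and follows exactly the paper's route: the paper's proof simply asserts that ``the conditions in Setup A are trivially satisfied'' by the construction of Example \ref{exm:1} and then invokes Proposition \ref{prop:A}, and your argument is just the honest expansion of that assertion (checking that $\varepsilon_{g,h}$ and $\varepsilon_{h,g}$ literally coincide so their composite is the identity, and that perfectness forces $\sigma_{\mathbb{S}^i}=\mathrm{Id}$ via the iteration of Lemma \ref{lem:key}(1) already carried out in Lemma \ref{lem:peri}). The only cosmetic slip is writing $\mathbb{S}^d=\mathrm{Id}_{\mathcal{A}}$ where you mean that the composite is the identity transformation of the functor $\mathbb{S}^d$; this does not affect the argument.
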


\begin{proof}
By the construction in Example \ref{exm:1}, the conditions in Setup A are trivially satisfied. Then the result follows from Proposition \ref{prop:A}.
\end{proof}

\subsection{Special case II: a peroidic Serre duality}
Let $\{F_g, \varepsilon_{g, h}|\; g, h\in G\}$  be a $k$-linear $G$-action on $\mathcal{A}$. We will work in the following setup.

\vskip 5pt

\noindent{\bf Setup B:}\quad Each auto-equivalence $F_g\colon \mathcal{A}\rightarrow \mathcal{A}$ is $Z(\mathcal{A})$-linear. In particular, we obtain a group homomorphism $G\rightarrow {\rm Aut}(\mathcal{A})$ sending $g$ to $[F_g]$.

\vskip 5pt

We recall a general fact; compare \cite[Subsection 4.1.3]{DGNO}.

\begin{fact}\label{fact:1}
Assume that we are in Setup B. Let $\rho\colon G\rightarrow Z(\mathcal{A})^\times$ be an arbitrary group homomorphism. For an object $(X, \alpha)$ in $\mathcal{A}^G$, we define another object $(X, \rho\otimes\alpha)$ such that $(\rho\otimes\alpha)_g=\rho(g^{-1}).\alpha_g$ for each $g\in G$; here, we use ``." to denote the $Z(\mathcal{A})$-action on the Hom spaces in $\mathcal{A}$. Observe that it is indeed a $G$-equivariant object. This gives rise to an automorphism on $\mathcal{A}^G$
$$\rho\otimes-\colon \mathcal{A}^G\longrightarrow \mathcal{A}^G,$$
which sends $(X, \alpha)$ to $(X, \rho\otimes \alpha)$ and acts on morphisms by the identity.
\end{fact}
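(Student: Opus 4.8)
The plan is to verify the two assertions in Fact~\ref{fact:1} directly from the definitions: that $(X,\rho\otimes\alpha)$ is again a $G$-equivariant object, and that $\rho\otimes-$ is a (strict) automorphism of $\mathcal{A}^G$. Both reduce to routine bookkeeping, with the one conceptual ingredient being the $Z(\mathcal{A})$-linearity hypothesis of Setup~B.

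First I would check the cocycle relation (\ref{equ:rel}) for the data $(\rho\otimes\alpha)_g=\rho(g^{-1}).\alpha_g$. Writing $\rho(g^{-1})$ for the central natural automorphism and using that $F_g$ is $Z(\mathcal{A})$-linear, so that $F_g(\rho(h^{-1}).\alpha_h)=\rho(h^{-1}).F_g(\alpha_h)$, one computes
\[
(\varepsilon_{g,h})_X\circ F_g\bigl((\rho\otimes\alpha)_h\bigr)\circ (\rho\otimes\alpha)_g
=\rho(h^{-1})\rho(g^{-1}).\bigl((\varepsilon_{g,h})_X\circ F_g(\alpha_h)\circ \alpha_g\bigr),
\]
where one also uses that central elements commute with every morphism (in particular with $(\varepsilon_{g,h})_X$) and that the $Z(\mathcal{A})$-action on Hom spaces is compatible with composition. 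Since $\rho$ is a homomorphism, $\rho(h^{-1})\rho(g^{-1})=\rho((gh)^{-1})$, and the right-hand side becomes $\rho((gh)^{-1}).\alpha_{gh}=(\rho\otimes\alpha)_{gh}$, as required. (One should also note $(\rho\otimes\alpha)_e=\rho(e).\alpha_e=\alpha_e=u_X^{-1}$, consistent with the remark after (\ref{equ:rel}).)

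Next I would check that $\rho\otimes-$ is well-defined on morphisms and functorial. If $f\colon (X,\alpha)\to(Y,\beta)$ satisfies $\beta_g\circ f=F_g(f)\circ\alpha_g$, then applying $\rho(g^{-1}).(-)$ and using $Z(\mathcal{A})$-bilinearity of composition gives $(\rho\otimes\beta)_g\circ f=F_g(f)\circ(\rho\otimes\alpha)_g$, so $f$ is again a morphism $(X,\rho\otimes\alpha)\to(Y,\rho\otimes\beta)$; since the underlying morphism is unchanged, functoriality is immediate. Finally, $\rho\otimes-$ is an automorphism with inverse $\rho^{-1}\otimes-$, because $\rho^{-1}(g^{-1}).\bigl(\rho(g^{-1}).\alpha_g\bigr)=\alpha_g$; this also shows it is $k$-linear (indeed additive and identity on morphisms), hence an equivalence.

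I expect no serious obstacle here; the only point requiring care is the systematic use of the $Z(\mathcal{A})$-linearity of each $F_g$ from Setup~B together with centrality of $\rho(g)$, which is exactly what allows the factors $\rho(\cdot)$ to be pulled through $F_g$ and through the structure isomorphisms $\varepsilon_{g,h}$ so that the homomorphism property of $\rho$ can be invoked. This is the analogue for equivariant categories of twisting an equivariant structure by a character, and the verification is the expected diagram chase.
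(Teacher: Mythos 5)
Your verification is correct and is exactly the routine check the paper leaves implicit (the Fact is stated without proof, with a pointer to \cite[Subsection 4.1.3]{DGNO}): the cocycle relation for $\rho\otimes\alpha$ follows from the $Z(\mathcal{A})$-linearity of each $F_g$ in Setup B, the centrality of $\rho(g)$, and the homomorphism property of $\rho$, and the functor is a strict automorphism with inverse $\rho^{-1}\otimes-$. No gaps.
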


Let $\mathcal{A}$ have a periodic Serre duality. We take a periodicity isomorphism $\eta\colon \mathbb{S}^d\rightarrow {\rm Id}_\mathcal{A}$ for some $d\geq 1$. We recall from Subsection 2.3 the induced homomorphism $\kappa\colon {\rm Aut}(\mathcal{A})\rightarrow Z(\mathcal{A})^\times$. For each $g\in G$, we write $\kappa(g)=\kappa([F_g])$. This defines a group homomorphism
$$\kappa\colon G\longrightarrow Z(\mathcal{A})^\times,$$
which is referred as the \emph{induced homomorphism} associated to the group action and the periodicity isomorphism $\eta$. In particular, by Fact \ref{fact:1} we have the automorphism $\kappa \otimes-\colon \mathcal{A}^G\rightarrow \mathcal{A}^G$.

\begin{prop}\label{prop:B}
Let $\mathcal{A}$ have a periodic Serre duality with a periodicity isomorphism $\eta\colon \mathbb{S}^d\rightarrow {\rm Id}_\mathcal{A}$. Assume that we are in Setup B. Then we have an isomorphism of endofunctors on $\mathcal{A}^G$
$$(\mathbb{S}^G)^d\stackrel{\sim}\longrightarrow \kappa\otimes- .$$
In particular, the auto-equivalence $\mathbb{S}^G$ on $\mathcal{A}^G$ is periodic.
\end{prop}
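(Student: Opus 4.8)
The goal is to produce, for each object $(X,\alpha)$ in $\mathcal{A}^G$, an isomorphism $(\mathbb{S}^G)^d(X,\alpha)\to (\kappa\otimes -)(X,\alpha) = (X, \kappa\otimes\alpha)$, natural in $(X,\alpha)$. I would first unwind what $(\mathbb{S}^G)^d$ does to the equivariant structure. Since $\mathbb{S}^G$ sends $(X,\alpha)$ to $(\mathbb{S}X,\tilde\alpha)$ with $\tilde\alpha_g = (\sigma_g)_X^{-1}\circ\mathbb{S}(\alpha_g)$, iterating $d$ times produces $(\mathbb{S}^dX, \alpha^{(d)})$ where $\alpha^{(d)}_g$ is built from $\mathbb{S}^d(\alpha_g)$ twisted by the $d$-th commutator isomorphism $\sigma_{F_g}^d = \sigma_g^d\colon F_g\mathbb{S}^d\to\mathbb{S}^dF_g$ from Subsection 2.2; concretely I expect $\alpha^{(d)}_g = (\sigma_g^d)_X^{-1}\circ\mathbb{S}^d(\alpha_g)$, which should follow by an easy induction on $d$ using the inductive definition $\sigma^{d+1}_F = \mathbb{S}\sigma^d_F\circ\sigma_F\mathbb{S}^d$ and the naturalness of $\sigma_g$. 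The natural candidate for the isomorphism $(\mathbb{S}^G)^d(X,\alpha)\to(X,\kappa\otimes\alpha)$ is then $\eta_X\colon\mathbb{S}^dX\to X$ itself, using the fixed periodicity isomorphism.

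**Key steps.** (1) Establish the formula $\alpha^{(d)}_g = (\sigma_g^d)_X^{-1}\circ\mathbb{S}^d(\alpha_g)$ by induction. (2) Verify that $\eta_X$ is a morphism $(\mathbb{S}^dX,\alpha^{(d)})\to(X,\kappa\otimes\alpha)$ in $\mathcal{A}^G$, i.e.\ that $(\kappa\otimes\alpha)_g\circ\eta_X = F_g(\eta_X)\circ\alpha^{(d)}_g$ for all $g$. Writing out the left side as $\kappa(g^{-1})_X\circ\alpha_g\circ\eta_X$ and the right side as $F_g(\eta_X)\circ(\sigma_g^d)_X^{-1}\circ\mathbb{S}^d(\alpha_g)$, and using naturalness of $\eta$ to move $\alpha_g$ past it, this identity reduces precisely to the definition of $\kappa(F_g)$ from diagram (\ref{equ:3}): namely $t_{F_g} = \eta F_g\circ\sigma^d_{F_g}\circ F_g\eta^{-1} = \kappa(F_g)F_g$, evaluated at $X$. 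Here I need the $Z(\mathcal{A})$-linearity of $F_g$ (Setup B) so that $F_g(\kappa(g^{-1})_X) = \kappa(g^{-1})_{F_gX}$ and hence $\kappa(g^{-1}).$ commutes with $F_g$ appropriately — this is exactly what lets the center element pass through. (3) Check that the assignment $(X,\alpha)\mapsto\eta_X$ is natural in $(X,\alpha)$: for a morphism $f\colon(X,\alpha)\to(Y,\beta)$, one needs $\eta_Y\circ\mathbb{S}^d(f) = f\circ\eta_X$, which is just the naturalness of $\eta\colon\mathbb{S}^d\to\mathrm{Id}_{\mathcal{A}}$. (4) Conclude: since $\kappa\otimes-$ is an automorphism of $\mathcal{A}^G$ (Fact \ref{fact:1}), its power being $(\mathbb{S}^G)^{d\cdot\mathrm{ord}(\kappa\otimes-)}\cong\mathrm{Id}$ shows $\mathbb{S}^G$ is periodic; more simply, $(\mathbb{S}^G)^d\cong\kappa\otimes-$ has finite order in $\mathrm{Aut}(\mathcal{A}^G)$ because $\kappa$ takes values in the finite... well, in any case $\kappa\otimes-$ visibly has finite order since $\rho\otimes-$ composed appropriately is controlled by $\rho$, and a suitable power of $\kappa\otimes-$ is the identity once we raise $\kappa$ to kill it — actually it suffices that $(\kappa\otimes-)^m = \mathrm{Id}$ for $m$ with $\kappa^m = 1$, which exists because $G$ is finite so $\kappa\colon G\to Z(\mathcal{A})^\times$ has image of finite order.

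**The main obstacle.** The technical heart is step (2), and within it the bookkeeping of step (1): I expect the identification of the $d$-fold iterated equivariant structure $\alpha^{(d)}$ with $(\sigma_g^d)_X^{-1}\circ\mathbb{S}^d(\alpha_g)$ to require care, since naively composing the formula for $\tilde\alpha$ with itself produces a nested expression $(\sigma_g)^{-1}_{\mathbb{S}^{d-1}X}\circ\mathbb{S}((\sigma_g)^{-1}_{\mathbb{S}^{d-2}X})\circ\cdots$ that must be recognized as $(\sigma_g^d)_X^{-1}$ via the inductive definition of the higher commutator isomorphism — this is a direct but fiddly induction using naturalness of $\sigma_g$ at each stage, entirely parallel to the computation already done in the proof of Proposition \ref{prop:1}. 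Once the formula is in hand, step (2) is a clean application of the definition of $\kappa(F_g)$ together with Setup B, and no genuinely new idea is needed beyond what Subsection 2.3 and Fact \ref{fact:1} already supply.
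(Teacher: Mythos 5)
Your proposal is correct and follows essentially the same route as the paper: the paper likewise identifies the $d$-fold iterated equivariant structure with $(\sigma_g^d)_X^{-1}\circ\mathbb{S}^d(\alpha_g)$ (stating it as an observation where you flag the induction), takes $\eta_X$ as the candidate morphism, reduces the equivariance check to the identity $F_g(\eta_X)\circ(\sigma_{F_g}^d)_X^{-1}=\kappa(F_g)^{-1}.\eta_{F_gX}$ coming from (\ref{equ:3}), and concludes periodicity from $(\kappa\otimes-)^{|G|}=\mathrm{Id}$.
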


\begin{proof}
Let $(X, \alpha)$ be an arbitrary object in $\mathcal{A}^G$. We observe that $(\mathbb{S}^G)^d(X, \alpha)=(\mathbb{S}^dX, \tilde{\alpha}^d)$, where $\tilde{\alpha}^d_g=(\sigma_g^d)^{-1}_X \circ \mathbb{S}^d(\alpha_g)$ for each $g\in G$. Here,  $\sigma_g^d=\sigma^d_{F_g}\colon F_g\mathbb{S}^d\rightarrow \mathbb{S}^dF_g$ is the $d$-th commutator isomorphism associated to $F_g$. On the other hand, $(\kappa \otimes-)(X, \alpha)=(X, \kappa\otimes \alpha)$, where $(\kappa\otimes \alpha)_g=\kappa(F_g)^{-1}.\alpha_g$. We claim that $$\eta_X\colon (\mathbb{S}^dX, \tilde{\alpha}^d)\longrightarrow (X, \kappa\otimes\alpha)$$
 is a morphism, and thus an isomorphism, in $\mathcal{A}^G$.

 It suffices to show that $F_g(\eta_X)\circ \tilde{\alpha}_g^d=(\kappa(F_g)^{-1}.\alpha_g)\circ \eta_X$. By (\ref{equ:3}) we have $F_g(\eta_X)\circ (\sigma_{F_g}^d)^{-1}_X=(t_{F_g})^{-1}\circ \eta_{F_gX}=\kappa(F_g)^{-1}.\eta_{F_gX}$. It follows that  $F_g(\eta_X)\circ \tilde{\alpha}_g^d=(\kappa(F_g)^{-1}.\eta_{F_gX})\circ \mathbb{S}^d(\alpha_g)$, which equals, by the naturalness of $\eta$, $(\kappa(F_g)^{-1}.\alpha_g)\circ \eta_X$. We are done with the claim.

 We observe that the above isomorphism $\eta_X$ is natural in $(X, \alpha)$. This proves the required isomorphism of functors. Since the $|G|$-th power of the automorphism $\kappa\otimes-$ equals the identity functor, the isomorphism implies that $\mathbb{S}^G$ is periodic.
\end{proof}

We observe the following immediate consequence of Theorem \ref{thm:1} and Proposition \ref{prop:B}.

\begin{cor}\label{cor:B}
Let $\mathcal{A}$ have a  periodic Serre duality.  Assume that we are in Setup B and that the order $|G|$ of $G$ is invertible in $k$. Then the category $\mathcal{A}^G$ has a  periodic Serre duality.\hfill $\square$
\end{cor}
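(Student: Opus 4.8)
The plan is to obtain the statement by simply chaining Theorem~\ref{thm:1} with Proposition~\ref{prop:B}; there is nothing genuinely new to prove, only a short bookkeeping argument. First, since $|G|$ is invertible in $k$, Theorem~\ref{thm:1} applies and produces the Serre duality (\ref{equ:S-equi}) on $\mathcal{A}^G$, whose Serre functor is the auto-equivalence $\mathbb{S}^G$ constructed earlier in this section. By the very definition of a periodic Serre duality, the only remaining point is to check that $\mathbb{S}^G$ is periodic, that is, that some positive power of $\mathbb{S}^G$ is isomorphic to ${\rm Id}_{\mathcal{A}^G}$.

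Next I would feed the hypotheses into Proposition~\ref{prop:B}: we are in Setup~B, and since $\mathcal{A}$ has a periodic Serre duality we may fix a periodicity isomorphism $\eta\colon\mathbb{S}^d\rightarrow{\rm Id}_\mathcal{A}$. Proposition~\ref{prop:B} then gives an isomorphism of endofunctors $(\mathbb{S}^G)^d\stackrel{\sim}{\longrightarrow}\kappa\otimes-$, where $\kappa\colon G\rightarrow Z(\mathcal{A})^\times$ is the induced homomorphism and $\kappa\otimes-$ is the automorphism of $\mathcal{A}^G$ attached to it via Fact~\ref{fact:1}. A straightforward induction from the formula in Fact~\ref{fact:1} shows that the $n$-th power of $\kappa\otimes-$ sends $(X,\alpha)$ to $(X,\alpha')$ with $\alpha'_g=\kappa(g)^{-n}.\alpha_g$; taking $n=|G|$ and using $\kappa(g)^{|G|}=\kappa(g^{|G|})=\kappa(e)=1$ for every $g\in G$, we get $(\kappa\otimes-)^{|G|}={\rm Id}_{\mathcal{A}^G}$. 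Hence
\[
(\mathbb{S}^G)^{d|G|}\;\cong\;(\kappa\otimes-)^{|G|}\;=\;{\rm Id}_{\mathcal{A}^G},
\]
so $\mathbb{S}^G$ is periodic of order dividing $d|G|$, and therefore $\mathcal{A}^G$ has a periodic Serre duality.

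I do not expect a real obstacle here; the only thing worth being careful about is that the Serre functor furnished by Theorem~\ref{thm:1} and the functor appearing in Proposition~\ref{prop:B} are literally the same $\mathbb{S}^G$ --- both are the endofunctor built from the commutator isomorphisms $\sigma_g$ earlier in this section --- so the two results may be composed without any compatibility check. It is also worth recording, as an aside, that the bound $d|G|$ shows the order of $\mathbb{S}^G$ need not equal the order $d$ of $\mathbb{S}$, consistent with the discussion preceding this corollary.
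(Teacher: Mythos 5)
Your proof is correct and matches the paper's intent exactly: the corollary is stated there as an immediate consequence of Theorem \ref{thm:1} and Proposition \ref{prop:B}, and the observation that $(\kappa\otimes-)^{|G|}={\rm Id}_{\mathcal{A}^G}$ (hence $(\mathbb{S}^G)^{d|G|}\cong{\rm Id}_{\mathcal{A}^G}$) is precisely the closing remark in the paper's proof of Proposition \ref{prop:B}. You have merely spelled out the bookkeeping the paper leaves implicit, so there is nothing to add.
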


\subsection{Special case III: a more explicit Serre functor}

Let $\{F_g, \varepsilon_{g, h}|\; g, h\in G\}$  be a $k$-linear $G$-action on $\mathcal{A}$. We will work in the following setup.

\vskip 5pt

\noindent {\bf Setup C.} \quad Each auto-equivalence $F_g\colon \mathcal{A}\rightarrow \mathcal{A}$ is $Z(\mathcal{A})$-linear satisfying $F_g\mathbb{S}=\mathbb{S}F_g$; moreover, we assume that $\mathbb{S}\varepsilon_{g,h}=\varepsilon_{g, h}\mathbb{S}$.

\vskip 5pt

We consider the commutator isomorphism $\sigma_g=\sigma_{F_g}\colon F_g\mathbb{S}\rightarrow \mathbb{S}F_g=F_g\mathbb{S}$. By Lemma \ref{lem:Z} there is a unique element $\gamma(g)\in Z(\mathcal{A})^\times$ such that
\begin{align}\label{equ:5}
\sigma_g=\gamma(g) \; {\rm Id}_{F_g\mathbb{S}}.
\end{align}
By (\ref{equ:8}) and the assumption $\mathbb{S}\varepsilon_{g,h}=\varepsilon_{g, h}\mathbb{S}$,  we infer $\gamma(gg')=\gamma(g)\gamma(g')$ for any $g, g'\in G$. In other words, we have a group homomorphism
$$\gamma\colon G\longrightarrow Z(\mathcal{A})^\times,$$
which is referred as the \emph{commutator homomorphism} associated to the group action. In particular, by Fact \ref{fact:1} we have the automorphism $\gamma\otimes-\colon \mathcal{A}^G\rightarrow \mathcal{A}^G.$

We observe that for an object $(X, \alpha)$ in $\mathcal{A}^G$, the pair $(\mathbb{S}X, \mathbb{S}(\alpha))$ is also a $G$-equivariant object, where $\mathbb{S}(\alpha)_g=\mathbb{S}(\alpha_g)\colon \mathbb{S}X\rightarrow F_g\mathbb{S}X=\mathbb{S}F_gX$ for each $g\in G$. In this way, we obtain an auto-equivalence $ \mathcal{A}^G\rightarrow \mathcal{A}^G$ sending $(X, \alpha)$ to $(\mathbb{S}X, \mathbb{S}(\alpha))$, and a morphism $f$ to $\mathbb{S}(f)$; by abuse of notation, this auto-equivalence is denoted by $\mathbb{S}\colon \mathcal{A}^G\rightarrow \mathcal{A}^G$.

The following result describes more explicitly the auto-equivalence $\mathbb{S}^G$ on  $\mathcal{A}^G$.

\begin{prop}\label{prop:C}
Assume that we are in Setup C. Keep the notation as above. Then as endofunctors on $\mathcal{A}^G$ we have
$$\mathbb{S}^G=(\gamma\otimes-) \; \mathbb{S}.$$
\end{prop}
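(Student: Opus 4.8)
The goal is to show that on $\mathcal{A}^G$ the functor $\mathbb{S}^G$ agrees, object by object and morphism by morphism, with the composite $(\gamma\otimes-)\,\mathbb{S}$. Since both functors act on a morphism $f$ as $\mathbb{S}(f)$, the only content is the comparison of the equivariant structures on objects. Fix $(X,\alpha)\in\mathcal{A}^G$. By definition $\mathbb{S}^G(X,\alpha)=(\mathbb{S}X,\tilde{\alpha})$ with $\tilde{\alpha}_g=(\sigma_g)^{-1}_X\circ\mathbb{S}(\alpha_g)$, while $(\gamma\otimes-)\,\mathbb{S}(X,\alpha)=(\mathbb{S}X,\gamma\otimes\mathbb{S}(\alpha))$ with $(\gamma\otimes\mathbb{S}(\alpha))_g=\gamma(g^{-1}).\mathbb{S}(\alpha_g)$. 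So the plan is simply to identify these two $G$-equivariant objects by showing $\tilde{\alpha}_g=\gamma(g^{-1}).\mathbb{S}(\alpha_g)$ for every $g\in G$; the identity endofunctor on morphisms then finishes the statement.

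\textbf{Key steps.} First I would record that under Setup C we have $\mathbb{S}F_g=F_g\mathbb{S}$ as functors, so $\mathbb{S}F_gX=F_g\mathbb{S}X$ and the codomains of $\tilde{\alpha}_g$ and $(\gamma\otimes\mathbb{S}(\alpha))_g$ literally coincide, namely $F_g\mathbb{S}X$; thus the comparison is a genuine equation of morphisms $\mathbb{S}X\to F_g\mathbb{S}X$. Second, invoke the defining equation (\ref{equ:5}), $\sigma_g=\gamma(g)\,{\rm Id}_{F_g\mathbb{S}}$, which gives $(\sigma_g)^{-1}_X=\gamma(g)^{-1}_{?}\,{\rm Id}$ — more precisely $(\sigma_g)^{-1}_X=(\gamma(g)^{-1})_{F_g\mathbb{S}X}$, i.e. $(\sigma_g)^{-1}_X=\gamma(g^{-1}).{\rm Id}_{F_g\mathbb{S}X}$ using that $\gamma$ is a group homomorphism and $\gamma(g^{-1})=\gamma(g)^{-1}$ in $Z(\mathcal{A})^\times$. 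Third, substitute into the definition of $\tilde{\alpha}_g$:
\[
\tilde{\alpha}_g=(\sigma_g)^{-1}_X\circ\mathbb{S}(\alpha_g)=\bigl(\gamma(g^{-1}).{\rm Id}\bigr)\circ\mathbb{S}(\alpha_g)=\gamma(g^{-1}).\mathbb{S}(\alpha_g),
\]
where the last equality is just the definition of the $Z(\mathcal{A})$-action on Hom spaces (precomposition/postcomposition by a central natural transformation). This is exactly $(\gamma\otimes\mathbb{S}(\alpha))_g$, so $(\mathbb{S}X,\tilde{\alpha})=(\mathbb{S}X,\gamma\otimes\mathbb{S}(\alpha))$ as objects of $\mathcal{A}^G$, and the two functors agree on objects. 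Finally, both functors send $f$ to $\mathbb{S}(f)$ — for $\mathbb{S}^G$ by definition, and for $(\gamma\otimes-)\,\mathbb{S}$ because $\gamma\otimes-$ acts on morphisms by the identity — so they coincide as functors.

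\textbf{Main obstacle.} There is no deep obstacle here; the proposition is essentially a bookkeeping consequence of (\ref{equ:5}) and Setup C. The one point that needs care is making sure the codomains match \emph{as objects}, not merely up to canonical isomorphism: this is where the hypothesis $F_g\mathbb{S}=\mathbb{S}F_g$ (strict equality of functors) in Setup C is used, so that $\tilde{\alpha}_g$ and $(\gamma\otimes\mathbb{S}(\alpha))_g$ are parallel arrows and the equation makes sense on the nose. A secondary subtlety is the direction of the twist: one must check that the sign/inverse conventions in Fact \ref{fact:1} (namely $(\rho\otimes\alpha)_g=\rho(g^{-1}).\alpha_g$) line up with $(\sigma_g)^{-1}=\gamma(g)^{-1}\cdot{\rm Id}$, which is why $\gamma(g^{-1})$ appears on both sides; this is a routine verification but worth stating explicitly. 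One should also briefly note that it suffices to check the equivariance identity for all $g$, since an object of $\mathcal{A}^G$ is determined by its underlying object together with the family $(\alpha_g)_{g\in G}$.
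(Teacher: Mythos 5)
Your proof is correct and follows essentially the same route as the paper: both reduce the statement to the pointwise identity $\tilde{\alpha}_g=\gamma(g^{-1}).\mathbb{S}(\alpha_g)$, obtained by substituting $\sigma_g=\gamma(g)\,{\rm Id}_{F_g\mathbb{S}}$ from (\ref{equ:5}) into the definition of $\tilde{\alpha}_g$, and then note that both functors act as $\mathbb{S}(f)$ on morphisms. Your version merely spells out the bookkeeping (codomain matching via $F_g\mathbb{S}=\mathbb{S}F_g$ and the inverse convention $\gamma(g)^{-1}=\gamma(g^{-1})$) that the paper leaves implicit.
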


\begin{proof}
Let $(X, \alpha)$ be an arbitrary object in $\mathcal{A}^G$. Recall that $\mathbb{S}^G(X, \alpha)=(\mathbb{S}X, \tilde{\alpha})$, where in view of (\ref{equ:5}) we have $\tilde{\alpha}_g=\gamma(g^{-1}).\mathbb{S}(\alpha_g)$ for each $g\in G$. In other words, we have $\tilde{\alpha}=\gamma\otimes \mathbb{S}(\alpha)$ and thus $\mathbb{S}^G(X, \alpha)=((\gamma\otimes-)\; \mathbb{S})(X, \alpha)$. This proves that the two endofunctors agree on objects. It is obvious that they agree on morphisms. Then we are done.
\end{proof}

\subsection{Examples} We will close the note with our motivating examples; see \cite{GL87,Lentalk,CCZ}. Let $k$ be a field whose characteristic is not $2$,  and let $\lambda\in k$ which is not $0$ or $1$. Denote by $C_2=\{e, g\}$ the cyclic group of order $2$.

We denote by $\mathbb{X}$ the weighted projective line in the sense of \cite{GL87} with weight sequence $(2, 2,2,2)$ and parameter sequence $(\infty, 0, 1, \lambda)$. We denote by $\mathbb{E}$ the projective plane curve defined by the equation $y^2z=x(x-z)(x-\lambda z)$, which is a smooth elliptic curve. Recall that the category $\mbox{coh-}\mathbb{E}$ of coherent sheaves on $\mathbb{E}$ has a Serre duality such that its Serre functor being the identity functor. In particular, the category $\mbox{coh-}\mathbb{E}$ has a perfect Serre duality; see Example \ref{exm:3}.

The category $\mbox{coh-}\mathbb{X}$ of coherent sheaves on $\mathbb{X}$ has a Serre duality with the Serre functor $\mathbb{S}$ being the degree-shifting functor given by the dualizing element; in particular, we have  $\mathbb{S}^2={\rm Id}_{\mbox{coh-}\mathbb{X}}$. Moreover, the category $\mbox{coh-}\mathbb{X}$  has a perfect Serre duality. Here, the perfectness is implicitly contained in the proof of the Serre duality in \cite[Subsection 2.2]{GL87}. On the other hand, one might deduce the perfectness from the equivalence (\ref{equ:6}) and Remark \ref{rem:2}.

  We have the strict $C_2$-action on $\mbox{coh-}\mathbb{X}$ induced by $\mathbb{S}$ and the identity transformation; see Example \ref{exm:1}. It follows from Corollary \ref{cor:A} that $\mbox{coh-}\mathbb{X}\sslash \mathbb{S}=(\mbox{coh-}\mathbb{X})^{C_2}$ has a Serre duality such that its Serre functor is isomorphic to the identity functor. However, this is known since we have an equivalence $\mbox{coh-}\mathbb{X}\sslash \mathbb{S}\stackrel{\sim}\longrightarrow \mbox{coh-}\mathbb{E}$; see \cite{Lentalk} and \cite[Theorem 7.7]{CCZ} for details.

We observe an automorphism $\sigma$ of order $2$ on $\mathbb{E}$ such that $\sigma(x)=x$, $\sigma(y)=-y$ and $\sigma(z)=z$.  This gives rise to a strict $C_2$-action such that $F_g=\sigma_*$, the direct image functor, on  $\mbox{coh-}\mathbb{E}$. Then the conditions in Setup C are satisfied. Here, we recall that $Z(\mbox{coh-}\mathbb{E})$ is isomorphic to $k$. Denote by $\gamma\colon C_2\rightarrow k^\times$ the commutator homomorphism. It follows from Proposition \ref{prop:C} that the Serre functor on $(\mbox{coh-}\mathbb{E})^{C_2}$ equals $\gamma\otimes -$. However, we recall from \cite[Example 5.8]{GL87} and \cite[Theorem 7.7]{CCZ} the equivalence
\begin{align}\label{equ:6}
(\mbox{coh-}\mathbb{E})^{C_2}\stackrel{\sim}\longrightarrow  \mbox{coh-}\mathbb{X}.
 \end{align}
  In particular, the Serre functor on $(\mbox{coh-}\mathbb{E})^{C_2}$ is not isomorphic to the identity functor. This forces that the commutator homomorphism $\gamma$ is non-trivial, that is, $\gamma(g)=-1$. Thus we have described explicitly the Serre functor on $(\mbox{coh-}\mathbb{E})^{C_2}$.

Indeed, it would be very nice to compute directly the commutator homomorphism $\gamma\colon C_2\rightarrow k^\times$. In general, we ask the following question.

Let $\mathbb{Y}$ be a projective elliptic curve which is embedded in a projective $n$-space $\mathbb{P}^n$. Assume that $G$ is a finite subgroup of $GL_{n+1}(k)$ which acts on $\mathbb{P}^n$ naturally with $G.\mathbb{Y}=\mathbb{Y}$. In this way, we have a strict $k$-linear $G$-action on $\mbox{coh-}\mathbb{Y}$ given by the direct image functors. This action satisfies the conditions in Setup C, where the Serre functor on  $\mbox{coh-}\mathbb{Y}$ is taken to be the identity functor. What is the commutator homomorphism $\gamma \colon G\rightarrow k^\times$ for this action? It seems that $\gamma$ might coincide with the determinant map of the matrices in $G$.

\vskip 15pt

\noindent {\bf Acknowledgements}\; The author is grateful to Helmut Lenzing for giving him the reference \cite{Lentalk}. The author  thanks Zengqiang Lin for drawing his attention to the reference \cite{Kel}, and thanks Jianmin Chen and Dirk Kussin for helpful comments. This work is supported by NCET-12-0507, National Natural Science Foundation of China (No. 11201446) and the Alexander von Humboldt Stiftung.

\bibliography{}

\vskip 15pt

{\tiny \noindent   Xiao-Wu Chen \\
 School of Mathematical Sciences,\\
  University of Science and Technology of China, Hefei 230026, Anhui, PR China \\
  Wu Wen-Tsun Key Laboratory of Mathematics,\\
 USTC, Chinese Academy of Sciences, Hefei 230026, Anhui, PR China.\\
URL: http://home.ustc.edu.cn/$^\sim$xwchen.}

\end{document}